 \newtheorem{thmA}{Theorem}
\newtheorem{conjA}[thmA]{Conjecture}
\newtheorem{theorem}{Theorem}[section]
\newtheorem{corollary}[theorem]{Corollary}
\newtheorem{lemma}[theorem]{Lemma}
\newtheorem{proposition}[theorem]{Proposition}
\newtheorem*{theorem*}{Theorem}
\theoremstyle{definition}
\newtheorem{example}[theorem]{Example}
\newtheorem{remark}[theorem]{Remark}
\newtheorem*{remark*}{Remark}
\newtheorem*{notation*}{Notation}
\newtheorem*{acks*}{Acknowledgements}
\newtheorem*{out*}{Outline}
\renewcommand\leq{\leqslant}
\renewcommand\geq{\geqslant}
\newcommand{\qm}{\operatorname{Q}}
\newcommand{\Hom}{\operatorname{Hom}}
\newcommand{\CC}{\operatorname{C}}
\newcommand{\BB}{\operatorname{B}}
\newcommand{\HHH}{\operatorname{H}}
\newcommand{\SSS}{\operatorname{S}}
\newcommand{\N}{\mathbb{N}}
\newcommand{\Z}{\mathbb{Z}}
\newcommand{\Q}{\mathbb{Q}}
\newcommand{\R}{\mathbb{R}}
\newcommand{\cl}{\operatorname{cl}}
\newcommand{\scl}{\operatorname{scl}}
\begin{document}

\title{Stable commutator length on free $\mathbb{Q}$-groups}
\author{Francesco Fournier-Facio}
\date{\today}
\maketitle

\begin{abstract}
We study stable commutator length on free $\mathbb{Q}$-groups. We prove that every non-identity element has positive stable commutator length, and that the corresponding free group embeds isometrically. We deduce that a non-abelian free $\mathbb{Q}$-group has an infinite-dimensional space of homogeneous quasimorphisms modulo homomorphisms, answering a question of Casals-Ruiz, Garreta, and de la Nuez Gonz{\'a}lez. We conjecture that stable commutator length is rational on free $\mathbb{Q}$-groups. This is connected to the long-standing problem of rationality on surface groups: indeed, we show that free $\mathbb{Q}$-groups contain isometrically embedded copies of non-orientable surface groups.
\end{abstract}

\section{Introduction}

A \emph{quasimorphism} on a group $G$ is a function $\varphi \colon G \to \R$ whose \emph{defect}
\[D(\varphi) \coloneqq \sup\limits_{g, h \in G} |\varphi(g) + \varphi(h) - \varphi(gh)|,\]
is finite. A quasimorphism is \emph{homogeneous} if moreover $\varphi(g^n) = n\varphi(g)$ for all $g \in G, n \in \Z$; the space of homogeneous quasimorphisms on $G$ is denoted by $\qm(G)$.
Quasimorphisms are central objects to the study of groups in relation to other areas of mathematics, in particular bounded cohomology \cite{frigerio}, knot theory \cite{knot}, symplectic geometry \cite{symp} and one-dimensional dynamics \cite{ghys}.

In most of these applications, what really matters is the quotient space $\qm(G)/\Hom(G)$ of homogeneous quasimorphisms modulo (real-valued) homomorphisms, which is sometimes called the space of \emph{non-trivial quasimorphisms}. There are several conditions in the literature that ensure that there are no non-trivial quasimorphisms: for example, satisfying a law \cite{law}.
Here we are interested in the dual property, namely, the existence of a \emph{surjective word map}. An element $w$ in a free group $F_m$ defines a word map $w \colon G^m \to G$, obtained by substituting the basis of $F_m$ with the input tuple in $G^m$. If $w \in [F_m, F_m]$, then the surjectivity of the corresponding word map implies that $G$ is \emph{uniformly perfect}, from which it follows easily that it has no non-trivial quasimorphisms. We prove that this is the only case in which such a criterion holds.

\begin{thmA}[Quasimorphisms]
\label{intro:thm:qm}

There exists a countable group $G$ with following properties:
\begin{itemize}
\item For every $w \in F_m \setminus [F_m, F_m]$, the word map $w \in G^m \to G$ is surjective;
\item The space $\qm(G)/\Hom(G)$ is infinite-dimensional.
\end{itemize}
\end{thmA}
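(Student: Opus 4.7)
The plan is to take $G$ to be the non-abelian free $\mathbb{Q}$-group of some rank $k \geq 2$, the main object of study in this paper. Recall that this is the free object in the category of $\mathbb{Q}$-groups, so in particular the $n$-th power map $g \mapsto g^n$ is a bijection on $G$ for every $n \neq 0$.

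The first bullet is essentially automatic. Fix $w \in F_m \setminus [F_m, F_m]$; its image in the abelianization $F_m / [F_m, F_m] \cong \Z^m$ is nonzero, so some exponent sum $n_i$ is nonzero. Evaluating the word map at the tuple $(1, \ldots, g, \ldots, 1)$ with $g$ in the $i$-th slot collapses every commutator in $w$ and returns $g^{n_i}$. Since $G$ is a $\mathbb{Q}$-group, $g \mapsto g^{n_i}$ is surjective, so even this one-parameter slice of the word map is already onto.

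For the second bullet I would invoke the isometric embedding $F \hookrightarrow G$ of the corresponding free group with respect to $\scl$, advertised in the abstract as the main technical result of the paper. Assume for contradiction that $V := \qm(G)/\Hom(G)$ is finite-dimensional, and let $V_F$ denote the image of $V$ under restriction to $F$. For any $g \in [F,F] \setminus \{1\}$ and $\epsilon > 0$, Bavard duality on $G$ produces $\psi \in \qm(G)$ with
\[
\frac{\psi(g)}{2 D(\psi)} > \scl_G(g) - \epsilon = \scl_F(g) - \epsilon.
\]
Since $D(\psi|_F) \leq D(\psi)$, the class $[\psi|_F] \in V_F$ already witnesses $\scl_F(g)$ to accuracy $\epsilon$. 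Thus $V_F$ is a norming subspace in the Bavard pairing for $F$. Being finite-dimensional, $V_F$ is weak-$*$ closed in the dual Banach space $\qm(F)/\Hom(F)$, and a weak-$*$ closed norming subspace must equal the whole space. This would force $\qm(F)/\Hom(F)$ to be finite-dimensional, contradicting the classical infinite-dimensionality result for non-abelian free groups due to Brooks and Grigorchuk.

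The first bullet is immediate from the definition of a $\mathbb{Q}$-group, and the second is a formal consequence of Bavard duality applied to the isometric embedding $F \hookrightarrow G$. The real work is in establishing that embedding, which I would take as an earlier result of the paper and not attempt to reprove here; it is the main obstacle and the technical heart of the argument.
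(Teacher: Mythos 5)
Your proposal is correct, and the overall strategy coincides with the paper's: take $G$ to be a non-abelian free $\Q$-group (the paper actually phrases it as the $\Q$-completion of a torsion-free non-cyclic hyperbolic group, but it notes the free case explicitly), handle the first bullet by an exponent-sum / $n$-th-root argument, and handle the second by combining the isometric embedding $F \hookrightarrow G$ with Bavard duality and the Brooks--Grigorchuk infinite-dimensionality for $F$. The first bullet matches the paper's Lemma 5.1 almost verbatim. For the second bullet, the packaging of the functional analysis differs: the paper extracts the key consequence of the isometric embedding as a standalone Hahn--Banach extension statement (Corollary 2.5), which says that restriction $\qm(G)/\Hom(G) \to \qm(F)/\Hom(F)$ is \emph{surjective}, from which infinite-dimensionality is immediate. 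You instead argue by contradiction: if $\qm(G)/\Hom(G)$ were finite-dimensional, its restriction $V_F$ would be a finite-dimensional, hence weak-$*$ closed, norming subspace of $\qm(F)/\Hom(F)$, and a weak-$*$ closed norming subspace of a dual space must be the whole space by the bipolar theorem. Both arguments are Hahn--Banach in disguise and both are correct; the paper's direct extension statement is slightly cleaner and reusable elsewhere, while yours avoids needing an explicit extension. One small point: your norming claim is stated only for single elements $g \in [F,F]$, but the Bavard predual is $\BB_1^H(F)/\SSS(F)$, so strictly you should run the same estimate for arbitrary chains $c$; the argument goes through unchanged since the embedding is isometric on all of $\CC_1^H$.
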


This gives an example of a group with infinite-dimensional second bounded cohomology but non-trivial positive theory, answering positively \cite[Question 9.5]{positivetrees}. We also give an example of a different flavour using Thompson groups (Proposition \ref{prop:thompson}).

\medskip

Recall that $G$ is a \emph{$\Q$-group} if every element admits a unique $k$-th root, for every $k \geq 1$. Such groups were introduced by Baumslag \cite{baumslag}, who was particularly intersted in \emph{free $\Q$-groups} on a set $S$, denoted $F_S^{\Q}$. The structure of free $\Q$-groups was further studied by Myasnikov--Remeslennikov \cite{MR1, MR2}, and recently Jaikin-Zapirain proved that they are residually torsion-free nilpotent \cite{jaikin}. The group in Theorem \ref{intro:thm:qm} can be taken to be a non-abelian free $\Q$-group.

We will prove Theorem \ref{intro:thm:qm} via the dual approach of \emph{stable commutator length}, or $\scl$ for short. Given an element $g \in [G, G]$, we denote by $\cl_G(g)$ the minimal number of commutators $[x, y] : x, y \in G$ whose product equals $g$, and
\[\scl_G(g) \coloneqq \lim\limits_{n \to \infty} \frac{\cl_G(g^n)}{n}.\]
If there exists $k \geq 1$ such that $g^k \in [G, G]$, we set $\scl_G(g) = \scl_G(g^k)/k$, and otherwise we set $\scl_G(g) = \infty$. In a natural way, we can extend the domain of definition of $\scl$ to \emph{chains} on $G$ (Section \ref{s:scl}). This is the subject of a rich theory \cite{calegari}, especially thanks to its many incarnations: algebra (via the above definition), geometry (via efficient fillings of loops by surfaces), and most importantly for us: functional analysis. In this context Bavard Duality relates $\scl$ and quasimorphisms \cite{bavard}, which allows to deduce Theorem \ref{intro:thm:qm} from the following statement, in the spirit of \cite{calewalk}.

\begin{thmA}[Isometry]
\label{intro:thm:scl:embedding}
The embedding $F_S \to F_S^{\Q}$ is isometric for $\scl$.
\end{thmA}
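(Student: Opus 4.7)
The plan is to combine Bavard duality with an iterated extension of homogeneous quasimorphisms. Since $\scl$ is non-increasing under group homomorphisms, we have $\scl_{F_S^{\Q}}(g) \leq \scl_{F_S}(g)$ for every $g \in F_S$, so the content is the reverse inequality. Given $g \in F_S$ and $\varepsilon > 0$, Bavard duality on $F_S$ produces $\varphi \in \qm(F_S)$ with $\varphi(g)/(2D(\varphi)) \geq \scl_{F_S}(g) - \varepsilon$. If one can extend $\varphi$ to $\tilde\varphi \in \qm(F_S^{\Q})$ with $\tilde\varphi(g) = \varphi(g)$ and $D(\tilde\varphi) \leq D(\varphi)$, then Bavard duality on $F_S^{\Q}$ gives $\scl_{F_S^{\Q}}(g) \geq \scl_{F_S}(g) - \varepsilon$, and letting $\varepsilon \to 0$ finishes the proof.

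To produce the extension, I would first invoke Baumslag's presentation of $F_S^{\Q}$ as a countable direct limit $F_S = G_0 \hookrightarrow G_1 \hookrightarrow \cdots$, where each embedding $G_i \hookrightarrow G_{i+1}$ is a (possibly infinite) chain of basic amalgamations of the form $H \hookrightarrow H *_{\langle h \rangle = \langle t^k \rangle} \langle t \rangle$, adjoining a $k$-th root $t$ of some $h$. Since any compatible sequence of homogeneous quasimorphisms with uniformly bounded defect assembles into one on the direct limit with the same defect bound, the problem reduces to extending quasimorphisms across each such basic step with defect control.

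The heart of the argument is thus an extension lemma: for any group $H$, $h \in H$, and $k \geq 1$, any $\varphi \in \qm(H)$ extends to $\tilde\varphi \in \qm(H *_{\langle h \rangle = \langle t^k \rangle} \langle t \rangle)$ with $\tilde\varphi|_H = \varphi$ and $D(\tilde\varphi) \leq D(\varphi)$. The value $\tilde\varphi(t) := \varphi(h)/k$ is forced by homogeneity, and is consistent on the amalgamating subgroup because $\varphi$ is automatically linear on the cyclic subgroup $\langle h \rangle$. To define $\tilde\varphi$ on the whole amalgam I would use the Bass--Serre tree, or equivalently the normal form of reduced words: sum $\varphi$ on $H$-syllables and the genuine homomorphism $\psi(t^n) := n\varphi(h)/k$ on $\langle t \rangle$-syllables, and then homogenise.

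The hard part will be to control the defect \emph{exactly} through this extension, so that iterating over the countably many basic root extractions producing $F_S^{\Q}$ yields a uniformly bounded defect on the direct limit. Standard extension theorems for amalgams over cyclic subgroups tend to incur a bounded defect inflation at each step, which would blow up over infinitely many iterations. One must therefore exploit the particular rigidity of the present setting — the amalgamating subgroup is infinite cyclic, the new factor $\langle t \rangle$ is abelian so $\psi$ has defect zero, and $\varphi|_{\langle h \rangle}$ is already a homomorphism — to produce a strictly defect-preserving extension. As a backup, one could try the surface-geometric route of taking an admissible surface for $g^n$ in $F_S^{\Q}$ and surgering $t$-labelled loops into $k$-fold $h$-wrappings to land back in $F_S$, but controlling the resulting Euler characteristic seems more delicate, so the Bavard-duality route is preferable.
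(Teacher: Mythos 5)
Your reduction is sound: by Bavard duality, exact defect-preserving extension of homogeneous quasimorphisms across each step of the direct limit would indeed give the isometry, and you correctly identify the direct-limit/root-extension structure of $F_S^{\Q}$ (the paper handles the limit step via Lemma~\ref{lem:dirun}). But the proposal has a genuine gap exactly where you flag one: the ``extension lemma'' asserting $D(\tilde\varphi) \leq D(\varphi)$ across a root extension $H *_{\langle h\rangle = \langle t^k\rangle}\langle t\rangle$ is the entire content of the theorem, and your sketch does not prove it. Summing $\varphi$ over $H$-syllables and a homomorphism over $\langle t\rangle$-syllables in a Bass--Serre normal form, then homogenising, is not obviously well-defined (normal forms depend on coset representatives) and, more seriously, there is no reason the resulting function has defect $\leq D(\varphi)$: standard syllable-sum constructions and the homogenisation step each inflate the defect by a multiplicative constant, which is fatal over infinitely many iterations. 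The known extension results you allude to (Hull--Osin etc.) do not give exact defect preservation, and no a~priori argument in your proposal supplies it.

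The paper takes the opposite direction. It first proves the isometric embedding directly on the level of $\scl$, and only then deduces defect-preserving extension of quasimorphisms as a consequence (Corollary~\ref{cor:embedding:restriction}, via Hahn--Banach). The isometry itself comes from the Chen--Heuer formula for $\scl$ on amalgamated products (Theorem~\ref{thm:chenheuer}): for $E = G *_Z A$ and $c \in \BB_1^H(G)$,
\[
\scl_E(c) = \inf_{d \in \CC_1^H(Z)}\bigl\{\scl_G(c+d) + \scl_A(-d)\bigr\},
\]
and the key observation (Example~\ref{ex:chains:Q}) is that every nonzero chain in $\CC_1^H(A)$ for $A \leq \Q$ has \emph{infinite} $\scl$, forcing the infimum to be attained at $d=0$. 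That immediately gives $\scl_E(c) = \scl_G(c)$ with no defect bookkeeping at all. This is then propagated through the transfinite construction of the $\Q$-completion (Proposition~\ref{prop:rational}, Theorem~\ref{thm:iterated}, Section~\ref{sec:agroups}). Ironically, the ``surface-geometric route'' you dismiss as a backup is essentially the one the paper takes: Chen--Heuer is proved via admissible surfaces in graphs of groups. To repair your proposal you would either have to prove exact defect preservation across root extensions directly (which is not in the literature and seems genuinely hard), or switch to the $\scl$-side argument the paper uses.
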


Many landmark results on $\scl$ concern \emph{spectral gaps}. The typical statement is of the form: there exists $\varepsilon > 0$ such that for all $g \in [G, G]$, either $\scl_G(g) \geq \varepsilon$ or $\scl_G(g) = 0$, and in the latter case $g$ has to be of a special form (e.g. torsion, or conjugate to its own inverse) \cite{duncanhowie, calefuji, mcg, sgraags, chenheuer}. In a non-abelian free $\Q$-group we cannot hope for a gap: taking $e \neq g \in [F_S, F_S]$, by Theorem \ref{intro:thm:scl:embedding} we have $\scl_{F_S^{\Q}}(g^{1/k}) = \scl_{F_S}(g)/k$, which is positive and arbitrarily small. Still, vanishing only holds for the identity element.

\begin{thmA}[Positivity]
\label{intro:thm:positivity}

Every non-identity element in $F_S^{\Q}$ has positive $\scl$.
\end{thmA}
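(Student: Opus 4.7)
The plan is to reduce the problem to a subgroup of $F_S^{\Q}$ built from $F_S$ by finitely many centralizer extensions, establish positivity there by induction, and then transfer back using (a suitable extension of) Theorem \ref{intro:thm:scl:embedding}. To begin, if no non-trivial power of $g$ lies in $[F_S^{\Q}, F_S^{\Q}]$ then $\scl(g) = \infty$ by convention, so after replacing $g$ by such a power I may assume $g \in [F_S^{\Q}, F_S^{\Q}] \setminus \{e\}$.

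Next, I would invoke the Myasnikov--Remeslennikov description of $F_S^{\Q}$ as a directed union $\bigcup_n G_n$, where $G_0 = F_S$ and each inclusion $G_n \hookrightarrow G_{n+1}$ has the form of an amalgamated free product $G_n *_{\langle h \rangle} \langle t \mid t^k = h \rangle$ (a centralizer extension adjoining a single rational root of some $h \in G_n$). Since $g$ and finitely many commutators expressing it all live in some finite stage of this tower, I may pick $n$ so that $g \in [G_n, G_n] \setminus \{e\}$.

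The core of the argument would be to show, by induction on $n$, that every non-identity element of $[G_n, G_n]$ has positive $\scl_{G_n}$. The base case $G_0 = F_S$ is classical (Duncan--Howie). For the inductive step, $G_{n+1}$ is an amalgamated free product of $G_n$ with $\Z$ over an infinite cyclic subgroup, and I would invoke results on $\scl$ in graphs of groups with cyclic edge groups (e.g.\ work of Chen) to propagate positivity from $G_n$ to $G_{n+1}$. This is the main obstacle: the edge group $\langle h \rangle$ is not malnormal in $\langle t \rangle$, so a standard malnormal-amalgam theorem does not apply directly, and one likely has to exploit the specific structure of the extension---in particular, the fact that $\langle h \rangle$ has finite index in the cyclic vertex group $\langle t \rangle$, so that any element in reduced form involving $t$ admits good control via Bass--Serre theory.

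Finally, to transfer positivity to $F_S^{\Q}$, it suffices to show that $G_n \hookrightarrow F_S^{\Q}$ is $\scl$-isometric, since then $\scl_{F_S^{\Q}}(g) = \scl_{G_n}(g) > 0$. Since Theorem \ref{intro:thm:scl:embedding} provides this for $n = 0$, I would expect a refinement of its proof to apply at each stage (either by showing that each single step $G_n \hookrightarrow G_{n+1}$ is itself $\scl$-isometric and passing to the direct limit, or by directly comparing $G_n$-fillings with $F_S^{\Q}$-fillings of powers of $g$).
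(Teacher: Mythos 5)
Your plan --- reduce to finite stages of the Myasnikov--Remeslennikov tower, prove positivity on each stage $G_n$ by induction, and transfer back via $\scl$-isometry of $G_n \hookrightarrow F_S^{\Q}$ --- is essentially the paper's route, implemented there through Proposition~\ref{prop:rational}, Theorem~\ref{thm:iterated}, and Lemma~\ref{lem:dirun} for the limit. However, the obstacle you single out in the inductive step is a red herring, and pursuing it would send you down the wrong path. You observe that $\langle h\rangle$ is not malnormal in the abelian vertex group $\langle t\rangle$, and suggest exploiting that $\langle h\rangle$ has finite index in $\langle t\rangle$. But the positivity criterion used (Proposition~\ref{prop:positivity:malnormal}) only requires the edge group to be malnormal in \emph{one} of the two factors, and here the edge group is a centraliser in the torsion-free CSA group $G_n$, hence malnormal in $G_n$. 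That alone forces the amalgam to act $2$-acylindrically on its Bass--Serre tree: every length-$3$ path passes through a $G_n$-vertex and is therefore stabilised by the intersection of two distinct $G_n$-conjugates of $Z$, which is trivial. Thus every hyperbolic element of $G_{n+1}$ is generalised loxodromic, hence chiral by torsion-freeness (Lemma~\ref{lem:tfah}), hence has positive $\scl$. Elements conjugate into $G_n$ are handled by the inductive hypothesis together with item~(1) of Proposition~\ref{prop:rational}, and elements conjugate into $\langle t\rangle$ have a power lying in $\langle h\rangle \leq G_n$, so positivity pulls back. In short, your vague appeal to graphs-of-groups results with cyclic edge groups should be replaced by the acylindrical-action criterion, which works \emph{because} of the CSA structure of $G_n$, not despite the failure of malnormality in $\langle t\rangle$; as written, your inductive step is a genuine gap, even though the surrounding scaffolding matches the paper.
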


Theorems \ref{intro:thm:scl:embedding} and \ref{intro:thm:positivity} hold more generally for $A$-completions of torsion-free non-cyclic hyperbolic groups, where $A$ is a subring of $\Q$ (Corollary \ref{cor:hyp}). The infinite-dimensionality of the space of quasimorphisms follows for $A$-completions of more general groups, such as non-abelian right-angled Artin groups (Corollary \ref{cor:raags}).

\medskip

Another important class of results in $\scl$ is \emph{rationality} theorems \cite{cale:rational, chenfreeprod, chengog}. By analogy with the free group, the following conjecture is natural.

\begin{conjA}[Rationality]
\label{conj:rationality}

$\scl$ is rational on $F_S^{\Q}$.
\end{conjA}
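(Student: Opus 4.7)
The plan starts from Theorem \ref{intro:thm:scl:embedding}: the embedding $F_S \hookrightarrow F_S^{\Q}$ is isometric, so combining it with Calegari's rationality theorem \cite{cale:rational} immediately settles the conjecture for every $g \in F_S$, and extends it for free to any $g \in F_S^{\Q}$ some power of which lies in $F_S$, via the identity $\scl(g) = \scl(g^k)/k$. The genuinely new case is when no power of $g$ lies in $F_S$; such elements abound, for instance $x^{1/2} y^{1/3}$ in a free $\Q$-group on $\{x, y\}$.

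To handle the general case I would work at the level of chains, since Calegari's theorem actually gives rationality of $\scl$ on rational $1$-chains in $F_S$, not merely on individual elements. The aim would be to associate to each $g \in F_S^{\Q}$ a rational $1$-chain $c(g)$ on $F_S$ satisfying $\scl_{F_S^{\Q}}(g) = \scl_{F_S}(c(g))$; rationality then follows at once from Calegari. The chain $c(g)$ should be constructed inductively along the recursive description of $F_S^{\Q}$ as a direct limit of HNN extensions over cyclic subgroups due to Myasnikov--Remeslennikov \cite{MR1, MR2}, replacing each newly introduced root $h^{1/k}$ by a suitably weighted combination of $h$ and its conjugates, with denominators controlled by the root orders. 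Geometrically, one expects admissible surfaces realising $\scl_{F_S^{\Q}}(g)$ to correspond, via branched covers whose branching indices track the $\Q$-structure, to admissible surfaces in $F_S$ for $c(g)$; the same mechanism should already underlie the proof of Theorem \ref{intro:thm:scl:embedding}, so extracting such a chain from that argument is the natural entry point.

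The main obstacle is that the conjecture is strictly stronger than a well-known open problem. As announced in the abstract, $F_S^{\Q}$ contains isometrically embedded non-orientable surface groups, so rationality of $\scl$ on $F_S^{\Q}$ would in particular yield rationality on those surface groups, which has resisted significant effort. Any successful approach must therefore either make progress on the surface-group case or exploit some feature specific to the $\Q$-completion (for instance, the abundance of roots forcing extra admissible surfaces, or constraining the extremal quasimorphisms so strongly that a rational polyhedral structure appears) in order to bypass the surface difficulty. This is precisely why the statement is offered as a conjecture rather than a theorem.
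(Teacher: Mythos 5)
This statement is a \emph{conjecture} in the paper, not a theorem: the paper offers no proof, and you correctly recognize this by the end. Your discussion of \emph{why} it is hard is accurate and mirrors the paper's own discussion in Section~\ref{sec:rationality}: Theorem~\ref{intro:thm:surfaces} embeds $K_{2m+1}$ isometrically into $F_{2m}^{\Q}$, so rationality on $F_S^{\Q}$ would yield rationality on non-orientable surface groups, a long-standing open problem. You also correctly note that the case of chains supported on $F_S$ follows from Theorem~\ref{intro:thm:scl:embedding} together with \cite{cale:rational}.

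However, the middle of your proposal contains a concrete gap. You propose to associate to each $g \in F_S^{\Q}$ a rational chain $c(g)$ on $F_S$ with $\scl_{F_S^{\Q}}(g) = \scl_{F_S}(c(g))$, and you suggest that ``the same mechanism should already underlie the proof of Theorem~\ref{intro:thm:scl:embedding}.'' It does not. The proof of Theorem~\ref{intro:thm:scl:embedding} (via Theorem~\ref{thm:chenheuer} or via Remark~\ref{rem:alexis}) shows that for a chain $c$ already supported on $F_S$, no advantage is gained by passing to $F_S^{\Q}$; it says nothing about pushing a chain from $F_S^{\Q}$ back to $F_S$. For an element of a rational extension $G *_Z A$ that is loxodromic on the Bass--Serre tree, the Chen--Heuer duality does not produce a canonical chain in $G$ of equal $\scl$; it gives a linear programming problem, and even for $c_1+c_2$ with $c_i$ in the vertex groups, the answer is an infimum over edge chains $d$, not a single push-forward. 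In fact, if a chain transfer $g \mapsto c(g)$ with the stated property existed, the conjecture would follow immediately, \emph{including} rationality on surface groups; so your optimism in the second paragraph contradicts the obstruction you correctly identify in the third. The honest state of the art, reflected in the paper's Proposition~\ref{prop:rational:embed:completion} and Lemma~\ref{lem:roottorational}, is that rationality for the single root extension $\langle F_{2m}, t \mid t^2 = [a_1,b_1]\cdots[a_m,b_m]\rangle$, i.e.\ for $K_{2m+1}$, is already a necessary first step and is open.
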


This conjecture is likely to be hard. Indeed, one of the main open problems in $\scl$ is whether it is rational on fundamental groups of closed surfaces \cite[Question 7.5.4]{heuer:survey}. It turns out that Conjecture \ref{conj:rationality} is closely related to this problem.

\begin{thmA}[Surfaces]
\label{intro:thm:surfaces}

For all $m \geq 1$ there is an embedding $K_{2m+1} \to F_{2m}^{\Q}$ that is isometric for $\scl$, where $K_{2m+1}$ denotes the fundamental group of the closed non-orientable surface of demigenus $2m+1$.
\end{thmA}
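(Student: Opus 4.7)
Let $x_1, \ldots, x_{2m}$ be a free basis of $F_{2m}$ and set $w := x_1^2 x_2^2 \cdots x_{2m}^2 \in F_{2m}$. Denote by $y := w^{-1/2} \in F_{2m}^{\Q}$ the unique square root of $w^{-1}$. Since $w \cdot y^2 = 1$, the assignment $a_i \mapsto x_i$ for $1 \leq i \leq 2m$ and $a_{2m+1} \mapsto y$ respects the defining relation $a_1^2 \cdots a_{2m+1}^2 = 1$ of $K_{2m+1}$, and so defines a homomorphism $\phi \colon K_{2m+1} \to F_{2m}^{\Q}$.

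\textbf{Identifying the $\Q$-completions.} The heart of the plan is to upgrade $\phi$ to a $\Q$-group isomorphism $\alpha \colon K_{2m+1}^{\Q} \xrightarrow{\sim} F_{2m}^{\Q}$. Universality gives a $\Q$-group extension $\alpha$ of $\phi$. Conversely, the composition $F_{2m} \hookrightarrow K_{2m+1} \to K_{2m+1}^{\Q}$ is a homomorphism to a $\Q$-group, so by universality of $F_{2m}^{\Q}$ it extends to $\beta \colon F_{2m}^{\Q} \to K_{2m+1}^{\Q}$. Both compositions restrict to the identity on the defining generating subgroups: the nontrivial check is $\beta(y) = a_{2m+1}$, which follows from $\beta(y)^2 = \beta(w^{-1}) = a_{2m+1}^2$ in $K_{2m+1}^{\Q}$ together with uniqueness of square roots in $\Q$-groups. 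Universality then forces $\alpha\beta = \id_{F_{2m}^{\Q}}$ and $\beta\alpha = \id_{K_{2m+1}^{\Q}}$.

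\textbf{$\scl$-isometry.} For $m \geq 1$, the surface $K_{2m+1}$ has negative Euler characteristic, so $K_{2m+1}$ is a torsion-free non-elementary hyperbolic group. Corollary \ref{cor:hyp} then provides the $\scl$-isometric embedding $K_{2m+1} \hookrightarrow K_{2m+1}^{\Q}$; post-composing with the isomorphism $\alpha$ recovers $\phi$ and yields the claimed $\scl$-isometric embedding into $F_{2m}^{\Q}$. Injectivity is automatic from isometry combined with the Calegari--Fujiwara gap: any $g$ in the kernel would satisfy $\scl_{K_{2m+1}}(g) = \scl_{F_{2m}^{\Q}}(e) = 0$, which in a torsion-free hyperbolic group forces $g = e$.

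\textbf{Main obstacle.} The decisive step is the universal-property identification $K_{2m+1}^{\Q} \cong F_{2m}^{\Q}$, resting on the observation that both $y$ and $a_{2m+1}$ are pinned down as \emph{the} unique square root of $w^{-1}$ in their respective $\Q$-completions, which forces the two extensions to invert each other. Once this is in place, the analytic content of the theorem is entirely packaged in Corollary \ref{cor:hyp}, and the rest is formal.
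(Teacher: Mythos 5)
Your proof is correct and takes a genuinely different route from the paper's. The paper uses the presentation $K_{2m+1} = \langle a_1, b_1, \ldots, a_m, b_m, t \mid t^2 = [a_1,b_1]\cdots[a_m,b_m] \rangle$, exhibits $K_{2m+1}$ as a root extension of $F_{2m}$ over $z = \prod[a_i,b_i]$ (which generates its own centraliser, being no proper power), and then invokes Proposition~\ref{prop:rational:embed:completion} directly: that proposition shows $F_{2m}^{\Q}$ is an iterated rational extension of the root extension $K_{2m+1}$, so Theorem~\ref{thm:iterated} gives the isometric embedding $K_{2m+1}\hookrightarrow F_{2m}^{\Q}$. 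You instead work with the cross-cap presentation $\langle a_1,\ldots,a_{2m+1} \mid a_1^2\cdots a_{2m+1}^2 \rangle$, use the universal property of $\Q$-completions together with uniqueness of square roots to establish the isomorphism $K_{2m+1}^{\Q}\cong F_{2m}^{\Q}$, and then apply Corollary~\ref{cor:hyp} to the torsion-free non-elementary hyperbolic group $K_{2m+1}$ itself. Both routes ultimately rest on the same engine (the iterated rational extension structure of $\Q$-completions of torsion-free CSA groups without $\Z^2$), but applied to different base groups ($F_{2m}$ for the paper versus $K_{2m+1}$ for you). What your version buys is the structural observation, of independent interest, that the $\Q$-completion of a non-orientable surface group of odd demigenus $2m+1$ is isomorphic to the free $\Q$-group of rank $2m$ -- a fact that the paper's argument implies but does not record. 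Your closing remark on injectivity is slightly redundant (injectivity of $K_{2m+1}\to K_{2m+1}^{\Q}$ is already part of Corollary~\ref{cor:hyp}) and, as phrased, only covers kernel elements of finite $\scl$ via Calegari--Fujiwara; for elements of infinite $\scl$ one needs the preservation of $\HHH_1$ built into the definition of ``isometric for $\scl$'' -- but none of this affects the correctness of the main argument.
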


\begin{out*}
We recall some generalities on $\scl$ in Section \ref{sec:general}. Then in Section \ref{sec:iterated} we introduce rational extensions and iterated rational extensions, and prove some general results about their $\scl$. In Section \ref{sec:agroups} we treat $A$-completions, which include free $A$-groups, and apply the general results to them, proving Theorems \ref{intro:thm:scl:embedding} and \ref{intro:thm:positivity}. In Section \ref{sec:theory} we address \cite[Question 9.5]{positivetrees} and prove Theorem \ref{intro:thm:qm}. In Section \ref{sec:rationality} we discuss Conjecture \ref{conj:rationality} and prove Theorem \ref{intro:thm:surfaces}.
\end{out*}

\begin{acks*}
The author is supported by the Herchel Smith Postdoctoral Fellowship Fund. He thanks the Mathematisches Forschungsinstitut Oberwolfach for the hospitality, and Arman Darbinyan for the company. He thanks Montserrat Casals-Ruiz, Lvzhou Chen, Will Cohen, Alexis Marchand, Doron Puder and Henry Wilton for useful conversations, R{\'e}mi Coulon and Turbo Ho for the inspiration while working on \cite{tarski}, and the anonymous referee for the valuable comments.
\end{acks*}

\begin{notation*}
Subgroups of $\Q$ will play an important role. Hence we reserve $1$ for the whole number, and instead use $e$ to denote the identity element of a general discrete group. We will never explicitly write the group operation on $\Q$, to avoid confusion with the formal sums that appear when dealing with chains.
\end{notation*}

\section{Generalities on scl}
\label{sec:general}
\label{s:scl}

We start by extending the definition of $\scl$ to chains, following Calegari \cite{calegari}. Let $G$ be a discrete group. We denote by $\CC_1(G)$ the space of real-valued chains on $G$, namely the real vector space with basis $G$. The subspace of boundaries is
\[\BB_1(G) \coloneqq \mathrm{span}\{ g + h - gh : g, h \in G\}.\]
The quotient $\CC_1(G)/\BB_1(G)$ is the first real homology group $\HHH_1(G)$.

Consider an integral chain $c = \sum_i g_i$, where $g_i \in G$ (possibly with repetitions). Then $\cl_G(c)$ is the smallest number of commutators whose product is equal to an expression of the form
$\prod_i t_i g_i t_i^{-1}$, where $t_i \in G$. We then define
\[\scl_G\left(\sum_i g_i\right) = \lim\limits_{n \to \infty} \frac{\cl_G\left(\sum_i g_i^n\right)}{n},\]
and in fact the limit is an infimum \cite[Lemma 2.73]{calegari}.
By homogeneity, $\scl$ can be extended to all rational chains, and then by continuity to all real chains. It takes finite values precisely on $\BB_1(G)$.

\begin{lemma}
\label{lem:dirun}

Let $(G_j)_{j \in J}$ be a directed system of groups with colimit $G_J$. Let $c \in \CC_1(G_{j_0})$, which we identify with its image in $\CC_1(G_j), j_0 \leq j \in J$. Then
\[\scl_{G_J}(c) = \inf_{j \in J} \scl_{G_j}(c).\]
\end{lemma}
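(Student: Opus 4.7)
The plan is to prove the two inequalities separately. The easy direction $\scl_{G_J}(c) \leq \inf_j \scl_{G_j}(c)$ will follow from the fact that $\scl$ is non-increasing under group homomorphisms: each structure map $G_j \to G_J$ carries a witnessing expression $\prod_i t_i g_i^n t_i^{-1} = \prod_l [x_l, y_l]$ in $G_j$ to one of the same length in $G_J$, so $\cl_{G_J}(\sum_i g_i^n) \leq \cl_{G_j}(\sum_i g_i^n)$ for every integer chain. Dividing by $n$ and passing to the infimum gives $\scl_{G_J}(c) \leq \scl_{G_j}(c)$ for each $j \geq j_0$, and the inequality then extends to rational chains by homogeneity and to real chains by continuity.

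The converse inequality is the main content. By the same homogeneity and continuity, it suffices to treat an integral chain $c = \sum_i g_i$. Fix $\epsilon > 0$. Using that the limit defining $\scl$ is an infimum, I would choose $n$ so that $k := \cl_{G_J}(\sum_i g_i^n)$ satisfies $k/n \leq \scl_{G_J}(c) + \epsilon$, and fix elements $t_i, x_l, y_l \in G_J$ for which
\[\prod_i t_i \, g_i^n \, t_i^{-1} = \prod_{l=1}^k [x_l, y_l]\]
holds in $G_J$. Only finitely many elements of $G_J$ appear here, and since $G_J$ is a directed colimit in the category of groups, each of them lifts to some $G_j$; by directedness of $J$, all can be lifted to a single $G_{j'}$ with $j' \geq j_0$. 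The corresponding equation need not yet hold in $G_{j'}$, but its two sides have equal image in $G_J$, so standard directed-colimit theory produces some $j'' \geq j'$ at which it actually holds in $G_{j''}$. This gives $\cl_{G_{j''}}(\sum_i g_i^n) \leq k$ and hence $\scl_{G_{j''}}(c) \leq k/n \leq \scl_{G_J}(c) + \epsilon$. Letting $\epsilon \to 0$ yields $\inf_j \scl_{G_j}(c) \leq \scl_{G_J}(c)$.

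The main (and only) non-routine ingredient is the lifting step, which is essentially the statement that a finite piece of combinatorial data—here, the elements and the single relation they satisfy—is always witnessed at a finite stage of a directed colimit of groups. Everything else is the formal behaviour of $\scl$ under homomorphisms, scaling and limits.
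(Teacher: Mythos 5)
Your proof is correct and follows essentially the same strategy as the paper's: monotonicity for the easy inequality, and for the converse, reducing to integral chains, using that the defining limit is an infimum, and lifting the finitely many elements and the single witnessing relation to a finite stage of the directed system. The only (inessential) omission is that you do not explicitly dispose of the case $\scl_{G_J}(c) = \infty$ before picking a witnessing expression, whereas the paper does; since the converse inequality is vacuous in that case, this does not affect correctness.
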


\begin{proof}
Because $\scl$ is monotone, $\scl_{G_J}(c) \leq \inf \scl_{G_j}(c)$. For the converse inequality, we may assume that $\scl_{G_J}(c) < \infty$. Because $\scl$ extends uniquely from integral chains, we may also assume that $c = \sum_i g_i : g_i \in G_{j_0}$. The limit in the definition of $\scl_{G_J}$ is an infimum, so for all $\varepsilon > 0$ there exists $n$ such that
\[\frac{\cl_{G_J}\left( \sum_i g_i^n \right)}{n} < \scl_{G_J}(c) + \varepsilon.\]
There are only finitely many elements involved in an expression witnessing this inequality, which therefore must eventually hold on $G_j$. This implies that $\scl_{G_j}(c) < \scl_{G_J}(c) + \varepsilon$ eventually holds. Letting $\varepsilon \to 0$, we conclude.
\end{proof}

We denote by $\CC_1^H(G)$ the quotient of $\CC_1(G)$ by the subspace spanned by elements of the form $g^n - ng : g \in G, n \in \Z$; and $hgh^{-1} - g : g, h \in G$. The map $\CC_1(G) \to \HHH_1(G)$ factors through $\CC_1^H(G)$. The kernel of the induced map $\CC_1^H(G) \to \HHH_1(G)$ is denoted by $\BB_1^H(G)$, and is the image of $\BB_1(G)$ in $\CC_1^H(G)$. Since $\scl$ vanishes on the elements we quotiented out, and it is subadditive on rational chains, it induces a seminorm on $\BB_1^H(G)$. We denote by $\SSS(G) \subset \BB_1^H(G)$ the subspace of elements with vanishing $\scl$, so that $\scl$ is a norm on the quotient $\BB_1^H(G)/\SSS(G)$.

\begin{example}
\label{ex:chains:Q}

Let $A$ be a non-trivial subgroup of $\Q$. Then every element of $\CC_1^H(A)$ can be written uniquely as a real multiple of a fixed non-zero $a \in A$. It follows that $\CC_1^H(A) \cong \HHH_1(A)$, and so every non-zero element of $\CC_1^H(A)$ has infinite $\scl_A$.
\end{example}

The above example is very basic, but it will be useful later on. For a less trivial example, $\scl$ is a genuine norm on $\BB_1^H(G)$ whenever $G$ is a free group \cite[Proposition 2.84]{calegari} or even a non-elementary hyperbolic group \cite[Corollary 3.57]{calegari}.

\medskip

The fundamental result relating $\scl$ and quasimorphisms is the following.

\begin{theorem}[Generalised Bavard Duality {\cite[Section 2.6]{calegari}}]
\label{thm:bavard}

There is a natural isomorphism between the dual space of $\BB_1^H(G)/\SSS(G)$ with the $\scl$ norm, and the space $\qm(G)/\Hom(G)$, with the norm $2 D(\cdot)$.
\end{theorem}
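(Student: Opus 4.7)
This is the Generalised Bavard Duality theorem, proven in Calegari's book \cite{calegari} following Bavard's original duality for single elements \cite{bavard}. The plan is to construct natural maps in both directions between $\qm(G)/\Hom(G)$ and the continuous dual of $\BB_1^H(G)/\SSS(G)$, and verify they are mutually inverse isometries with the stated factor of $2$.

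\textbf{Easy direction.} Given a homogeneous quasimorphism $\varphi$, extend linearly to a functional $\CC_1(G) \to \R$. Homogeneity together with the defect condition forces $\varphi$ to be conjugation-invariant, so this extension annihilates both generating relations of $\CC_1^H(G)$ and hence factors through it. The key quantitative estimate is the classical bound: if $g \in [G,G]$ is a product of $k$ commutators, then $|\varphi(g)| \leq (2k-1) D(\varphi)$ (using homogeneity to cancel conjugates and inverses). Applying this to $c^n$ for $c$ a boundary, dividing by $n$, and taking infima yields $|\varphi(c)| \leq 2 D(\varphi) \scl(c)$ for all $c \in \BB_1^H(G)$. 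Thus $\varphi$ descends to a functional on $\BB_1^H(G)/\SSS(G)$ of operator norm at most $2 D(\varphi)$, and vanishes precisely when $\varphi$ is a homomorphism.

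\textbf{Hard direction.} Given a continuous linear functional $\ell$ on $\BB_1^H(G)/\SSS(G)$ of norm $N$, one wishes to produce $\varphi \in \qm(G)$ with $2 D(\varphi) \leq N$ whose associated functional is $\ell$. The plan is to apply Hahn--Banach to extend $\ell$ to a linear functional $L$ on all of $\CC_1^H(G)$, dominated by a sublinear functional $p$ chosen so that $p$ coincides with $N \cdot \scl$ on $\BB_1^H$ while simultaneously controlling the values of $L$ on sums of the form $gh - g - h$ via a ``defect budget''. Setting $\varphi \coloneqq L|_G$, the defining relations of $\CC_1^H(G)$ automatically yield $\varphi(g^n) = n \varphi(g)$ and $\varphi(hgh^{-1}) = \varphi(g)$, while the domination bound gives $D(\varphi) \leq N/2$. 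A straightforward check then shows this construction is inverse (modulo $\Hom(G)$) to the one from the easy direction, and that the norms match exactly.

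\textbf{Main obstacle.} The technical heart is designing the sublinear functional $p$ so that the Hahn--Banach extension has restriction to $G$ with finite defect bounded by $N/2$: one needs $p$ to be large enough off $\BB_1^H$ (where $\scl$ is infinite) to control the extension, yet small enough on the individual generators $gh - g - h$ to produce a genuine quasimorphism. Bavard handled the single-element case by a direct argument; extending to arbitrary chains, as is needed here to recover the full norm duality between $(\BB_1^H/\SSS, \scl)$ and $(\qm/\Hom, 2D)$, is precisely the refinement carried out in \cite[Section 2.6]{calegari}. Naturality of the isomorphism then follows from the functoriality of both sides of the duality.
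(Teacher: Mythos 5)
The paper does not prove this theorem: it is stated as a citation to \cite[Section 2.6]{calegari}, so there is no in-paper argument to compare against. Your sketch follows the same overall strategy as Calegari's proof (which in turn refines Bavard's): an explicit bound on homogeneous quasimorphisms evaluated on products of commutators for the easy direction, and a Hahn--Banach extension argument for the hard direction. That is the right plan.

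One point of your sketch is overstated, though. You present the construction of the dominating sublinear functional $p$ as a delicate balancing act (``large enough off $\BB_1^H$ \ldots yet small enough on $gh - g - h$''), but no design choice is actually required. One applies Hahn--Banach on the subspace $\BB_1^H(G)$ with the sublinear functional $N\cdot\scl$ itself, then extends the resulting $L_0$ to all of $\CC_1^H(G)$ arbitrarily (the extension off $\BB_1^H$ needs no control, since the defect only sees boundary chains). The restriction $\varphi = L|_G$ is automatically homogeneous and conjugation-invariant because of the defining relations of $\CC_1^H(G)$, and the defect bound $D(\varphi) \leq N/2$ follows from the elementary fact that $gh - g - h \in \BB_1^H(G)$ with $\scl(gh - g - h) \leq 1/2$ (a pair of pants bounds the chain $(gh) + g^{-1} + h^{-1}$). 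Your phrase ``defect budget'' is gesturing at exactly this estimate, but the sketch would be more convincing if you stated it explicitly rather than describing a construction of $p$ that does not in fact need to be constructed.
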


The way this duality is realised is by evaluating homogeneous quasimorphisms on chains. It is easy to see that the value only depends on the image of the chain in $\CC_1^H(G)$, and that homomorphisms evaluate trivially on $\BB_1^H(G)$.

From this we obtain the result connecting Theorems \ref{intro:thm:qm} and \ref{intro:thm:scl:embedding} from the introduction. Given a map $G_1 \to G_2$, we say that it is \emph{isometric for $\scl$} if the induced map $\CC_1^H(G_1) \to \CC_1^H(G_2)$ preserves $\scl$.

\begin{corollary}
\label{cor:embedding:restriction}

Let $G_1 \to G_2$ be a map that is isometric for $\scl$. Then for every $\varphi_1 \in \qm(G_1)$ there exists $\varphi_2 \in \qm(G_2)$ with the same defect whose pullback to $G_1$ coincides with $\varphi_1$. In particular, the pullback induces a surjection $\qm(G_2)/\Hom(G_2) \to \qm(G_1)/\Hom(G_1)$.
\end{corollary}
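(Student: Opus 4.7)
The plan is to combine Generalised Bavard Duality (Theorem \ref{thm:bavard}) with the Hahn-Banach theorem. Writing $\pi \colon G_1 \to G_2$ for the given map, the hypothesis that the induced map $\CC_1^H(G_1) \to \CC_1^H(G_2)$ preserves $\scl$ has three immediate consequences: it sends $\BB_1^H(G_1)$ into $\BB_1^H(G_2)$ since $\scl$ is finite precisely on boundaries; it sends $\SSS(G_1)$ into $\SSS(G_2)$ since $\SSS$ is defined by vanishing of $\scl$; and the descended map $\bar\pi \colon \BB_1^H(G_1)/\SSS(G_1) \to \BB_1^H(G_2)/\SSS(G_2)$ is an isometric embedding for the $\scl$-norms (injectivity uses the isometry in the reverse direction).

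Given $\varphi_1 \in \qm(G_1)$, Bavard Duality identifies its class with a bounded linear functional $\ell_1$ on $\BB_1^H(G_1)/\SSS(G_1)$ of norm exactly $2D(\varphi_1)$. Transporting $\ell_1$ across $\bar\pi$ and applying Hahn-Banach produces a functional $\ell_2$ on $\BB_1^H(G_2)/\SSS(G_2)$ of the same norm; undoing Bavard Duality yields a class $[\varphi_2] \in \qm(G_2)/\Hom(G_2)$ with $D(\varphi_2) = D(\varphi_1)$ whose pullback equals $[\varphi_1]$. This already proves the surjectivity in the ``in particular'' clause.

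The remaining subtlety is to upgrade equality of classes to exact equality of functions: a priori $\varphi_2 \circ \pi = \varphi_1 + \psi_1$ for some homomorphism $\psi_1 \colon G_1 \to \R$. I would first note that the isometry hypothesis forces the abelianization map $G_1/[G_1,G_1] \to G_2/[G_2,G_2]$ to be injective: if $\pi(g) \in [G_2, G_2]$, then $\scl_{G_2}(\pi(g)) < \infty$, so by isometry $\scl_{G_1}(g) < \infty$, whence $g \in [G_1, G_1]$. Then $\psi_1$ descends to a homomorphism on the subgroup $G_1/[G_1,G_1] \hookrightarrow G_2/[G_2,G_2]$, and injectivity of $\R$ as a $\Z$-module extends it to a homomorphism $\psi_2 \colon G_2 \to \R$ with $\psi_2 \circ \pi = \psi_1$. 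Replacing $\varphi_2$ by $\varphi_2 - \psi_2$ preserves the defect and restricts to $\varphi_1$ on the nose. The main conceptual content is the duality-plus-Hahn-Banach step; the one place one must be careful is this final divisibility cleanup, which is where the argument would break down without an injectivity property on abelianizations.
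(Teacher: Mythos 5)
Your overall strategy---reduce to functionals on $\BB_1^H/\SSS$ via Bavard duality, extend by Hahn--Banach, then correct by a homomorphism---matches the paper's proof exactly. However, the step where you claim the abelianization map $G_1/[G_1,G_1] \to G_2/[G_2,G_2]$ is injective contains a gap: from $\pi(g) \in [G_2,G_2]$ and the isometry you correctly deduce $\scl_{G_1}(g) < \infty$, but by the definitions in the paper this only yields $g^k \in [G_1,G_1]$ for some $k \geq 1$, not $g \in [G_1,G_1]$. What the isometry hypothesis actually gives is that the kernel of $G_1/[G_1,G_1] \to G_2/[G_2,G_2]$ is a torsion subgroup, equivalently that the induced map $\HHH_1(G_1) \to \HHH_1(G_2)$ on real first homology is injective; this is precisely the statement the paper records in its first paragraph.

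This weaker conclusion suffices to finish your argument: since $\psi_1$ takes values in the torsion-free group $\R$, it vanishes on all torsion in $G_1/[G_1,G_1]$, in particular on the kernel of the map to $G_2/[G_2,G_2]$, so it descends to the image and then extends to all of $G_2$ by divisibility of $\R$ (equivalently, by extending the corresponding linear functional along $\HHH_1(G_1) \hookrightarrow \HHH_1(G_2)$). With this small repair your proof is correct and coincides with the paper's.
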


\begin{proof}
Because the map preserves chains with infinite $\scl$, it induces an embedding $\HHH_1(G_1) \to \HHH_1(G_2)$. The dual of first real homology is the space of real-valued homomorphisms, hence we obtain the result in case $\varphi_1$ is a homomorphism.

Now suppose that $\varphi_1$ is not a homomorphism. The map realises $\BB_1^H(G_1)/\SSS(G_1)$ as a closed subspace of $\BB_1^H(G_2)/\SSS(G_2)$ with the $\scl$ norm. Theorem \ref{thm:bavard} interprets $\varphi_1$ as a functional on $\BB_1^H(G_1)/\SSS(G_1)$ with operator norm $2D(\varphi_1)$. Hahn--Banach gives a norm-preserving extension, that is, $\varphi_2 \in \qm(G_2)$ with $D(\varphi_2) = D(\varphi_1)$, whose pullback to $G_1$ is equal to $\varphi_1 + \psi_1$, for some $\psi_1 \in \Hom(G_1)$. By the first paragraph we can extend $\psi_1$ to $\psi_2 \in \Hom(G_2)$, and so $\varphi_2 - \psi_2$ is a defect-preserving extension of $\varphi_1$.
\end{proof}

When $G_1 \to G_2$ is an embedding, this is a statement about \emph{extendability} of quasimorphisms. This problem has received much attention over the past few years, especially in two extreme cases: for hyperbolically embedded subgroups \cite{hullosin, FPS}, and for normal subgroups \cite{extend, hhg}. However, in most of these results, the extension has controlled defect, but not \emph{the same} defect. Corollary \ref{cor:embedding:restriction} can be applied to several examples of isometric embeedings between free groups \cite{calewalk, alexis} or graphs of groups \cite{chenfreeprod, chengog, alexis2}.

\begin{remark}
In the above citations, an embedding is called isometric for $\scl$ if it preserves $\scl$ of $\BB_1^H$: it is not required to preserve elements of infinite $\scl$. Under this weaker assumption, the same argument as Corollary \ref{cor:embedding:restriction} shows that every quasimorphism admits a defect-preserving extension, but only up to modifying it by adding a homomorphism.
\end{remark}

Another corollary of Theorem \ref{thm:bavard} is a criterion for positivity of $\scl$.

\begin{corollary}
\label{cor:bavard}

For all $c \in \BB_1^H(G)$, we have: $\scl_G(c) > 0$ if and only if there exists $\varphi \in \qm(G)$ such that $\varphi(c) > 0$. In particular, if $\qm(G) = \Hom(G)$, then every $c \in \BB_1^H(G)$ has vanishing $\scl$.
\end{corollary}

We end this section with two results on $\scl$ of amalgamated products, which will be used in the proofs of Theorems \ref{intro:thm:scl:embedding} and \ref{intro:thm:positivity}, respectively. The first one is a theorem of Chen and Heuer, specialised to this case.

\begin{theorem}[{\cite[Theorem 4.2]{chenheuer}}]
\label{thm:chenheuer}

Let $P = G_1 *_Z G_2$ be an amalgamated product. Let $c_i \in \CC_1^H(G_i)$. Then
\[\scl_P(c_1 + c_2) = \inf \{ \scl_{G_1}(c_1 + d) + \scl_{G_2}(c_2 - d)\},\]
where $d$ runs over $\CC_1^H(Z)$.
\end{theorem}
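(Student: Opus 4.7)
The plan is to prove $\leq$ by algebraic concatenation and $\geq$ by geometric surgery on admissible surfaces; the second direction is the substantive content.

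For $\leq$: Fix $d \in \CC_1^H(Z)$ and $\varepsilon > 0$. Using homogeneity and continuity of $\scl$, reduce to the rational case and then, after clearing denominators, to the integral case. Since the limit in the definition of $\scl$ is in fact an infimum, there is $n$ together with explicit expressions witnessing $\cl_{G_j}(\cdot) \leq n(\scl_{G_j}(\cdot) + \varepsilon)$ for the relevant chains in $G_1$ and $G_2$ respectively. Pushing these into $P$ and concatenating, the contributions from $+nd$ on the $G_1$ side and $-nd$ on the $G_2$ side cancel because $Z$ embeds as a common subgroup of both factors, yielding a commutator expression for the appropriate multiple of $c_1 + c_2$ of length at most $n(\scl_{G_1}(c_1 + d) + \scl_{G_2}(c_2 - d) + 2\varepsilon)$. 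Dividing by $n$, letting $\varepsilon \to 0$, and taking the infimum over $d$ proves the inequality.

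For $\geq$: I would use the equivalent definition $\scl_G(c) = \inf_S -\chi^-(S) / (2 n(S))$, with $S$ ranging over admissible surfaces of degree $n(S)$ mapping to $K(G, 1)$. Realise $K(P, 1)$ as the graph of spaces $K(G_1, 1) \cup_{K(Z, 1)} K(G_2, 1)$ coming from the amalgamated splitting. Given a nearly optimal admissible surface $f \colon S \to K(P, 1)$ for $c_1 + c_2$, homotope $f$ to be transverse to $K(Z, 1)$. Since $\partial S$ maps into the $K(G_j, 1)$ pieces (as $c_i \in \CC_1^H(G_i)$), the preimage $L = f^{-1}(K(Z, 1))$ is a closed $1$-submanifold in the interior of $S$, hence a disjoint union of circles. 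Cutting $S$ along $L$ decomposes it as $S_1 \sqcup S_2$ with $S_j$ mapping into $K(G_j, 1)$. The new boundary circles of $S_j$ map to loops in $Z$ and collectively represent a chain $d \in \CC_1^H(Z)$ which appears with opposite signs along $\partial S_1$ and $\partial S_2$ due to opposite induced orientations. Thus $S_1$ is admissible for $c_1 + d$ and $S_2$ for $c_2 - d$, both of degree $n(S)$. Additivity of $\chi^-$ under the cut yields $\scl_{G_1}(c_1 + d) + \scl_{G_2}(c_2 - d) \leq -\chi^-(S)/(2n(S))$, and taking the infimum over $S$ proves $\geq$.

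The main obstacle lies in the normalisation underlying the cutting surgery: one must first place $L$ in minimal position, say by compressing inessential circles and iterating, so that no component of $S_j$ becomes a disc (which would inflate the degree without contributing to $\chi^-$ and spoil the bound) and so that the new boundary chain is genuinely an element of $\CC_1^H(Z)$ rather than merely a family of loops mapping into $Z$. Equally delicate is confirming that the degrees on the two pieces match $n(S)$ consistently after this compression. This careful cut-and-compress analysis is the technical heart of Chen and Heuer's argument, and mirrors the surface-decomposition techniques used in Calegari's treatment of scl for graphs of groups.
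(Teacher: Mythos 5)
The paper does not prove this theorem: it is quoted verbatim (specialised to a one-edge graph of groups) from Chen and Heuer \cite[Theorem~4.2]{chenheuer}, and no internal proof is offered. So there is no ``paper's own proof'' to compare against. Your sketch is a reasonable reconstruction of the geometric argument one would expect in the cited reference, in the tradition of Calegari's surface-cutting approach to $\scl$ over graphs of groups, and you are honest about where the technical weight lies. A few things should be tightened.

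For the $\leq$ direction, the claimed ``cancellation'' of the $+nd$ and $-nd$ contributions is not literal: the $G_1$-side witness conjugates $z^n$ by elements of $G_1$ while the $G_2$-side conjugates $z^{-n}$ by elements of $G_2$, so they do not cancel algebraically. You can merge each such pair at the cost of a bounded number of extra commutators (e.g.\ $a z^n a^{-1} b z^{-n} b^{-1} = [a,z^n][z^n,b]$), and this $O(1)$ overhead vanishes after dividing by $n$ — but this step needs to be said. A cleaner route avoids the issue entirely: $c_1 + c_2 = (c_1+d) + (c_2-d)$ in $\CC_1^H(P)$, and since $\scl_P$ is a subadditive seminorm on $\BB_1^H(P)$ and monotone under inclusions, $\scl_P(c_1+c_2) \leq \scl_P(c_1+d) + \scl_P(c_2-d) \leq \scl_{G_1}(c_1+d) + \scl_{G_2}(c_2-d)$; take the infimum over $d$.

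For the $\geq$ direction, your instinct that disc pieces cut off by $L$ are the enemy is correct, but the stated reason is off. A disc component of $S_j$ has no original boundary, so it contributes nothing to the degree $n(S_j)$ — it does not ``inflate the degree.'' The actual harm is that it has $\chi = 1$, so $\chi^-(S_j) < \chi(S_j)$, which breaks the additivity of $-\chi^-$ across the cut and worsens the bound by $+1/2n$ per disc. One removes such discs by compression: $\partial D$ maps to a loop in $Z$ that is nullhomotopic in $G_j$, hence (since $Z \hookrightarrow G_j$ is injective) nullhomotopic in $Z$, so $D$ can be pushed across $K(Z,1)$, reducing $|L|$. Also, your assertion that $L$ lies in the interior of $S$ because ``$\partial S$ maps into the $K(G_j,1)$ pieces'' is not automatic: elements of $c_i$ may lie in $Z$, so boundary loops can a priori land in $K(Z,1)$; a small perturbation pushes them into the open part of $K(G_i,1)$ and should be recorded. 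With these corrections the sketch is a faithful account of the expected proof.
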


The second one is a positivity result. We start by recalling some notions.
A subgroup $Z < G$ is \emph{malnormal} if $Z \cap gZg^{-1} = \{ e \}$ for all $g \in G \setminus Z$. An element $g \in G$ is \emph{chiral} if there is no $n \geq 1$ such that $g^n$ is conjugate to $g^{-n}$.

An action of a group $G$ on a metric space $X$ is \emph{acylindrical} if for all $r \in \N$ there exist $R, N \in \N$ such that
\[d(x, y) \geq R \Rightarrow \#\{ g \in G : \max \{ d(x, gx), d(y, gy) \} \leq r \} \leq N.\]
A group $G$ is \emph{acylindrically hyperbolic} if it admits a non-elementary acylindrical action on a Gromov-hyperbolic space. If $g \in G$ is loxodromic for one such action, we say that $g$ is a \emph{generalised loxodromic} element. We isolate the following well-known fact, as it will be used twice.

\begin{lemma}
\label{lem:tfah}

Let $G$ be a torsion-free acylindrically hyperbolic group, and let $g \in G$ be a generalised loxodromic element. Then $g$ is chiral.
\end{lemma}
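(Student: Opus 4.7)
The plan is to leverage the structure theory of elementary subgroups in acylindrically hyperbolic groups. For a generalised loxodromic element $g \in G$, the work of Dahmani--Guirardel--Osin and Osin provides a unique maximal virtually cyclic subgroup $E(g) \leq G$ containing $g$ (the \emph{elementary closure}), with the additional properties that $E(g^n) = E(g)$ for every integer $n \neq 0$, and that the centraliser $C_G(g^n)$ of any non-trivial power of $g$ is contained in $E(g)$. I would begin the proof by citing these facts.

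Next, I would use torsion-freeness to pin down the shape of $E(g)$. Since $E(g)$ is a torsion-free virtually cyclic group, it must be infinite cyclic: write $E(g) = \langle h \rangle$, with $g = h^k$ for some $k \geq 1$. In particular, $E(g)$ is abelian.

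The key step is then very short. Suppose for contradiction that $t g^n t^{-1} = g^{-n}$ for some $n \geq 1$ and $t \in G$. Then $t$ normalises $\langle g^n \rangle$. Since $\Aut(\langle g^n \rangle) \cong \Z/2$, the normaliser $N_G(\langle g^n \rangle)$ contains $C_G(g^n)$ with index at most $2$, and $C_G(g^n) \subseteq E(g)$ is virtually cyclic; hence $N_G(\langle g^n \rangle)$ is itself virtually cyclic and contains $g^n$. By maximality of $E(g^n) = E(g)$, this forces $t \in \langle h \rangle$. But $\langle h \rangle$ is abelian, so $t g^n t^{-1} = g^n$, contradicting $g^n \neq g^{-n}$; the latter holds because $g$ has infinite order in the torsion-free group $G$.

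No step is truly delicate: the only care needed is in invoking the correct structural facts about $E(g)$ from the acylindrically hyperbolic groups literature. Once these are cited, the argument is essentially automatic, which is presumably why the lemma is labelled as "well-known".
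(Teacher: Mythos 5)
Your proof is correct and takes essentially the same approach as the paper: both rely on the elementary closure $E(g)$ being the maximal virtually cyclic subgroup containing $g$, observe that torsion-freeness forces $E(g)$ to be cyclic hence abelian, and derive a contradiction from the conjugating element lying in $E(g)$. The only difference is cosmetic: the paper invokes \cite[Corollary 6.6]{DGO} directly to place the conjugating element inside $E(g)$, whereas you reconstruct this via the normaliser/centraliser argument and the identity $E(g^n)=E(g)$.
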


\begin{proof}
Suppose otherwise. Let $f \in G, n \geq 1$ be such that $f g^n f^{-1} = g^{-n}$. By \cite[Corollary 6.6]{DGO}, $f$ belongs to the elementary closure of $g$, which is the maximal virtually cyclic subgroup of $G$ containing $g$. But $G$ is torsion-free, and virtually cyclic torsion-free groups are cyclic, in particular abelian, which contradicts the relation $f g^n f^{-1} = g^{-n}$.
\end{proof}

\begin{proposition}
\label{prop:positivity:malnormal}

Let $P = G_1 *_Z G_2$ be an amalgamated product of torsion-free groups. Suppose that $Z$ is malnormal in $G_1$. Then every element in $P$ with vanishing $\scl$ is conjugate into a $G_i$.
\end{proposition}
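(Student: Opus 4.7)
The plan is to argue by contraposition: take $g \in P$ not conjugate into $G_1$ or $G_2$ and prove $\scl_P(g) > 0$. The strategy is to exhibit $g$ as a chiral generalised loxodromic element of $P$, viewed as an acylindrically hyperbolic group; a standard Bestvina--Fujiwara / Hull--Osin Brooks-type construction then yields a homogeneous quasimorphism $\varphi$ on $P$ with $\varphi(g) \neq 0$, and Corollary \ref{cor:bavard} concludes.

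The core technical step is to show that the action of $P$ on its Bass--Serre tree $T$ is acylindrical, using only the one-sided malnormality of $Z$ in $G_1$. I claim that every arc of length at least $3$ in $T$ has trivial pointwise stabiliser. By equivariance, it suffices to analyse the length-$2$ arcs centred at the basepoint $G_1$-vertex: such an arc has pointwise stabiliser of the form $Z \cap g Z g^{-1}$ for some $g \in G_1 \setminus Z$, which is trivial by malnormality. Since vertex types alternate along $T$, every arc of length at least $3$ contains an interior $G_1$-vertex and hence a $2$-subarc of the above form, so the action is $2$-acylindrical on $T$. This is the standard criterion yielding acylindricity in the sense of Osin.

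The element $g$ fixes no vertex of $T$ since it is not conjugate into a vertex group, so it acts loxodromically and is therefore generalised loxodromic in $P$. Setting aside the trivial cases where $P$ collapses to a vertex group (where the conclusion is immediate), $P$ is non-elementary: conjugates of $g$ by generic elements produce loxodromics on $T$ with independent axes, so $P$ is acylindrically hyperbolic. Since $P$ is torsion-free as an amalgamated product of torsion-free groups, Lemma \ref{lem:tfah} gives that $g$ is chiral, which completes the argument. The main obstacle is verifying the acylindricity step under the asymmetric malnormality hypothesis; once this is in place, the rest is a standard reduction to known results on $\scl$ of acylindrically hyperbolic groups.
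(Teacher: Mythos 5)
Your proposal is correct and follows essentially the same route as the paper: acylindricity of the $P$-action on the Bass--Serre tree via one-sided malnormality of $Z$ in $G_1$ (trivial stabilisers of length-$3$ paths), loxodromic iff not conjugate into a vertex group, and Lemma \ref{lem:tfah} for chirality. The only difference is presentational — the paper invokes \cite[Theorem 4.2]{autinv} directly, whereas you unpack that citation into the Bestvina--Fujiwara/Hull--Osin quasimorphism construction plus Corollary \ref{cor:bavard}; these are the same argument at different levels of encapsulation.
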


\begin{proof}
The malnormality condition implies that $P$ acts acylindrically on the Bass--Serre tree $T$: the stabiliser of a path of length $3$ is trivial. Hence by \cite[Theorem 4.2]{autinv}, every chiral loxodromic element has positive $\scl$. This applies to all elements that are not conjugate into a $G_i$, since by Lemma \ref{lem:tfah}, loxodromic elements are automatically chiral.
\end{proof}

\section{Iterated rational extensions}
\label{sec:iterated}
\label{sec:ire}

Let $A$ be a subgroup of $\Q$. Let $G$ be a group, and let $Z < G$ be isomorphic to a subgroup of $A$. The amalgamated product
\[G *_Z A\]
is called a \emph{rational extension} of $G$.

\begin{example}
Let $z \in G$ be an infinite order element and let $p \geq 1$. Then the \emph{root extension} $\langle G, t \mid t^p = z \rangle$ is a rational extension of $G$, where $A = \frac{1}{p}\Z$ and $Z$ denotes the infinite cyclic groups $G > \langle z \rangle \cong \Z < A$.
\end{example}

A rational extension $G *_Z A$ is called \emph{malnormal} if $Z$ is a malnormal subgroup of $G$.

\begin{proposition}
\label{prop:rational}

Let $E = G *_Z A$ be a rational extension of $G$.
\begin{enumerate}
\item The embedding $G \to E$ is isometric for $\scl$.
\item Suppose that the rational extension is malnormal, and $\scl_G(g) > 0$ for all $e \neq g \in G$. Then $\scl_E(g) > 0$ for all $e \neq g \in E$.
\end{enumerate}
\end{proposition}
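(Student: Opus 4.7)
The plan is to derive both parts from Theorem \ref{thm:chenheuer} combined with the rigid structure of $\CC_1^H$ on subgroups of $\Q$ recorded in Example \ref{ex:chains:Q}. The key preliminary observation is that the inclusion $Z \hookrightarrow A$ induces an injective linear map $\CC_1^H(Z) \to \CC_1^H(A)$, and in fact an isomorphism of one-dimensional real vector spaces when both groups are non-trivial: $\CC_1^H$ of an abelian group is just first real homology, i.e.\ tensoring with $\R$, which is flat.

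For part (1), Theorem \ref{thm:chenheuer} yields
\[\scl_E(c) = \inf_{d \in \CC_1^H(Z)} \bigl\{\scl_G(c+d) + \scl_A(-d)\bigr\}.\]
By Example \ref{ex:chains:Q}, $\scl_A(-d)$ is finite only when $d$ maps to $0$ in $\CC_1^H(A)$; by the injectivity above, this forces $d = 0$ in $\CC_1^H(Z)$, and hence in $\CC_1^H(G)$. So the only finite contribution to the infimum is the $d = 0$ term, yielding $\scl_E(c) = \scl_G(c)$.

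For part (2), the hypothesis forces $G$ to be torsion-free, and $A$ is torsion-free as a subgroup of $\Q$, so Proposition \ref{prop:positivity:malnormal} applies: any $g \in E$ with $\scl_E(g) = 0$ is conjugate into $G$ or into $A$. The first case is handled by part (1) together with the positivity hypothesis on $G$. For the second case, consider $g' \in A \setminus \{e\}$: another application of Theorem \ref{thm:chenheuer} gives $\scl_E(g') = \inf_d \{\scl_G(d) + \scl_A(g'-d)\}$. If $Z$ is trivial the infimum is immediately $\infty$; otherwise $\CC_1^H(Z) \to \CC_1^H(A)$ is an isomorphism of real lines, so the infimum is realised by a unique $d^* \in \CC_1^H(Z)$ mapping to $g' \in \CC_1^H(A)$, and $d^*$ is a nonzero real multiple of a fixed non-trivial $z_0 \in Z \subseteq G$. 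By homogeneity of $\scl$ and the positivity assumption on $G$, $\scl_G(d^*) > 0$, hence $\scl_E(g') > 0$, a contradiction.

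The main obstacle is the last step: one must chase through the identifications $\CC_1^H(Z) \cong \R \cong \CC_1^H(A)$ to extract a genuine nonzero real chain on $Z$ realising the Chen--Heuer infimum, and then invoke homogeneity of $\scl$ to transfer positivity from a single element of $Z$ to that chain. The remainder is a streamlined application of Theorem \ref{thm:chenheuer} and Proposition \ref{prop:positivity:malnormal}.
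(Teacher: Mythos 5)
Your proof is correct and follows essentially the same route as the paper's. For item (1), both arguments apply Theorem \ref{thm:chenheuer} and use the rigidity of $\CC_1^H$ on subgroups of $\Q$ (Example \ref{ex:chains:Q}) to force $d = 0$; the paper handles the infinite-$\scl$ case by noting $\HHH_1(G) \to \HHH_1(E)$ is injective, while you fold both cases into the Chen--Heuer formula, which is fine. For item (2), both use Proposition \ref{prop:positivity:malnormal} to reduce to the three conjugacy classes of the Bass--Serre tree. The only genuine divergence is the subcase where $g$ is conjugate into $A$: the paper argues via commensurability, picking $z \in Z$ with a common power with $g$ and concluding from $\scl_E(z) = \scl_G(z) > 0$, whereas you re-run Chen--Heuer to express $\scl_E(g')$ as $\scl_G(d^*)$ for a non-zero real multiple $d^*$ of some $z_0 \in Z$ and then invoke homogeneity. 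Both work; your version has the small benefit of explicitly disposing of the degenerate case $Z = \{e\}$, where the paper's common-power argument does not literally apply (though the conclusion is immediate there, as $g$ is then non-trivial in $\HHH_1(E)$).
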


\begin{proof}
First, we notice that the inclusion $G \to E$ induces an embedding $\HHH_1(G) \to \HHH_1(E)$. In particular, if $c \in \CC_1^H(G)$ has infinite $\scl_G$, then it has infinite $\scl_E$.
Now suppose that $c \in \BB_1^H(G)$, so that $\scl_E(c) \leq \scl_G(c) < \infty$. We write it as $c + 0$, seeing $0$ as an element of $\CC_1^H(A)$. Theorem \ref{thm:chenheuer} applies and gives
\[\scl_E(c) = \inf\{ \scl_{G}(c + d) + \scl_A(d)\}.\]
But Example \ref{ex:chains:Q} shows that $\scl_A(d) = \infty$ unless $d = 0$. Since $\scl_E(c) < \infty$, the infimum must be attained at $d = 0$. This gives the first item.

Now with the assumptions of the second item, $G$ must be torsion-free. Let $e \neq g \in E$; if $g$ is not conjugate into either $G$ or $A$, then $\scl_E(g) > 0$ by Proposition \ref{prop:positivity:malnormal}. If $g \in G$, then $\scl_E(g) = \scl_G(g) > 0$, by the first item, and the same follows for all conjugates. If $g \in A$, then there exists $z \in Z$ such that $g$ and $z$ have a common power. Since $\scl_E(z) = \scl_G(z) > 0$, this implies that $\scl_E(g) > 0$ as well, and the same follows for all conjugates.
\end{proof}

\begin{remark}
\label{rem:alexis}

The first item of Proposition \ref{prop:rational} can be proved alternatively using a result of Marchand \cite[Corollary 4.2]{alexis2}. In the case of amalgamated products, it states that if $P = G_1 *_Z G_2$, where $Z$ is amenable, and $\HHH_2(G_1; \Q) \to \HHH_2(P; \Q)$ is surjective, then the embedding $G_1 \to P$ preserves $\scl$ of boundaries.

Now let $E = G *_Z A$ be a rational extension. As in the proof of Proposition \ref{prop:rational}, it is easy to prove directly that the embedding $G \to E$ preserves elements of infinite $\scl$, so by the above it remains to show that $\HHH_2(G; \Q) \to \HHH_2(E; \Q)$ is surjective. The Mayer--Vietoris sequence gives:
\begin{align*}
\cdots &\to \HHH_2(Z; \Q) \to \HHH_2(G; \Q) \oplus \HHH_2(A; \Q) \to \HHH_2(E; \Q) \\
&\to \HHH_1(Z; \Q) \to \HHH_1(G; \Q) \oplus \HHH_1(A; \Q) \to \cdots
\end{align*}
Because $Z$ and $A$ are locally cyclic, they have homological dimension $1$. Moreover, $\HHH_1(Z; \Q) \to \HHH_1(A; \Q)$ is injective, so $\HHH_2(G; \Q) \to \HHH_2(E; \Q)$ is an isomorphism, and we conclude.
\end{remark}

We now push this process by transfinite induction. An \emph{iterated rational extension} is a directed union of groups indexed by ordinals $(G_j)_{j \leq \alpha}$, such that $G_{j+1}$ is a rational extension of $G_j$, and for a limit ordinal $\beta$, $G_\beta$ is the directed union of its subgroups $(G_j)_{j < \beta}$. If all of the rational extensions involved are malnormal, we call this an \emph{iterated malnormal rational extension}.

\begin{theorem}
\label{thm:iterated}

Let $(G_j)_{j \leq \alpha}$ be an iterated rational extension.
\begin{enumerate}
\item The embedding $G_0 \to G_\alpha$ is isometric for $\scl$.
\item Suppose that the iterated rational extension is malnormal, and $\scl_{G_0}(g) > 0$ for all $e \neq g \in G_0$. Then $\scl_{G_\alpha}(g) > 0$ for all $e \neq g \in G_\alpha$.
\end{enumerate}
\end{theorem}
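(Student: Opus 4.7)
The proof is by transfinite induction on $\alpha$, treating both items simultaneously; the base case $\alpha = 0$ is trivial. The key general fact I will use is that any initial segment $(G_j)_{j \leq \gamma}$, or more generally any subsequence $(G_j)_{j_0 \leq j \leq \gamma}$ after reindexing, is itself an iterated (malnormal) rational extension, so the inductive hypothesis applies to each of them.

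For the successor step $\alpha = \gamma + 1$, the embedding $G_\gamma \to G_\alpha$ is a single rational extension. Item~(1) follows by composing the inductively isometric embedding $G_0 \to G_\gamma$ with the isometry $G_\gamma \to G_\alpha$ coming from Proposition~\ref{prop:rational}(1). For item~(2), the inductive hypothesis gives positivity of $\scl_{G_\gamma}$ on non-identity elements, which forces $G_\gamma$ to be torsion-free, so Proposition~\ref{prop:rational}(2) applies to the malnormal rational extension $G_\gamma \to G_\alpha$ and yields positivity in $G_\alpha$.

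In the limit step, $G_\alpha$ is the directed union of the $G_\gamma$ for $\gamma < \alpha$, so Lemma~\ref{lem:dirun} gives $\scl_{G_\alpha}(c) = \inf_{\gamma < \alpha} \scl_{G_\gamma}(c)$ for any chain $c$ arising from some $G_\gamma$. Item~(1) is then immediate: for $c \in \CC_1^H(G_0)$, the inductive hypothesis yields $\scl_{G_\gamma}(c) = \scl_{G_0}(c)$ for every $\gamma < \alpha$, and the infimum equals $\scl_{G_0}(c)$.

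The main obstacle is item~(2) at a limit step: a naive infimum of positive numbers might collapse to zero. The fix is to use item~(1) to upgrade these a priori different positive numbers to a single constant value. Given $e \neq g \in G_\alpha$, pick $j_0 < \alpha$ with $g \in G_{j_0}$; for each $j_0 \leq \gamma < \alpha$, apply the inductive hypothesis (item~1) to the sub-extension $(G_j)_{j_0 \leq j \leq \gamma}$ to obtain $\scl_{G_\gamma}(g) = \scl_{G_{j_0}}(g)$, while the inductive hypothesis (item~2) applied to $(G_j)_{j \leq j_0}$ gives $\scl_{G_{j_0}}(g) > 0$. Lemma~\ref{lem:dirun} then produces $\scl_{G_\alpha}(g) = \scl_{G_{j_0}}(g) > 0$, as required.
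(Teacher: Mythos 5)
Your proof is correct and follows exactly the approach the paper sketches in one line (transfinite induction via Proposition~\ref{prop:rational} at successors and Lemma~\ref{lem:dirun} at limits). You have also correctly identified and filled the genuine subtlety at limit ordinals that the paper leaves implicit: one must invoke item~(1) on the tail subsequence to see that the infimum in Lemma~\ref{lem:dirun} is over a constant sequence, rather than a possibly-decreasing sequence of positive numbers.
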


\begin{proof}
This follows by transfinite induction, using Proposition \ref{prop:rational} for successor ordinals, and Lemma \ref{lem:dirun} for limit ordinals.
\end{proof}

\section{$A$-groups}
\label{sec:agroups}

Let $A$ be an associative ring. An \emph{$A$-group} is a group endowed with a map $G \times A \to G : (g,a) \mapsto g^a$ with the following properties, for all $g, h \in G, a, b \in A$:
\begin{enumerate}
\item $g^0 = e, g^1 = g, e^a = e$;
\item $g^{a+b} = g^ag^b, (g^a)^b = g^{ab}$;
\item $(hgh^{-1})^a = hg^a h^{-1}$;
\item If $gh = hg$, then $(gh)^a = g^a h^a$.
\end{enumerate}
We will only be concerned with $A$-groups when $A$ is a subring of $\Q$. The two extreme cases are $\Z$-groups, which are just groups, and $\Q$-groups, which are groups in which every element admits a unique $k$-th root for every $k \geq 1$.

Given a group $G$, every homomorphism from $G$ to an $A$-group factors through its \emph{$A$-completion}, denoted $G^A$: existence and uniqueness of $G^A$ are proved in \cite[Theorems 1 and 2]{MR1}.
In general, this map needs not be injective, and even when it is, it may not be easy to describe explicitly. We will focus on a case where an explicit description is possible. We are only concerned with subrings of $\Q$, in which case this was already achieved by Baumslag \cite[Sections V--VIII]{baumslag}, but we follow the more concise exposition of Myasnikov--Remeslennikov \cite{MR1, MR2}. Recall that a group is \emph{conjugately separated abelian (CSA)} if all centralisers of all non-identity elements are abelian and malnormal.

\medskip

Let $A$ be a subring of $\Q$. Suppose that $G$ is a torsion-free CSA group. We choose a collection $\mathcal{Z}$ of centralisers that are not already $A$-modules, and such that every centraliser that is not an $A$-module is conjugate to a unique $Z \in \mathcal{Z}$. Because every $Z \in \mathcal{Z}$ is torsion-free, we can define the amalgamated product $G *_Z (Z \otimes A)$. This group is still torsion-free, moreover it is CSA \cite[Theorem 5]{MR2}. Choosing a well-ordering of $\mathcal{Z}$, we apply this process inductively to obtain a group $G^*$, which is again torsion-free and CSA \cite[Lemma 6]{MR2}. Moreover, for every $e \neq g \in G$, if $Z$ is its centraliser in $G$, then its centraliser in $G^*$ is isomorphic to $Z \otimes A$ \cite[Lemma 4]{MR2}.

We define $G^{(0)} \coloneqq G$, and by induction $G^{(n+1)} \coloneqq (G^{(n)})^*$. Then the directed union
\[G = G^{(0)} \to G^{(1)} \to \cdots \to G^A\]
coincides with the $A$-completion of $G$ \cite[Theorem 8]{MR2}.

\medskip

Let us assume moreover that $G$ does not contain a copy of $\Z^2$. For a torsion-free CSA group, this is equivalent to saying that all centralisers are locally cyclic, hence isomorphic to subgroups of $\Q$. Then an amalgamated product $G *_Z (Z \otimes A)$ is a malnormal rational extension, and $G \to G^*$ is an iterated malnormal rational extension. By \cite[Lemmas 4 and 6]{MR2}, the group $G^*$ satisfies the same hypotheses as $G$, and so we can extend the inductive process to the directed union $G^{(0)} \to G^{(1)} \to \cdots \to G^A$. Therefore $G^A$ is an iterated malnormal rational extension of $G_0$, and Theorem \ref{thm:iterated} applies.

\begin{theorem}
\label{thm:Acomp}

Let $A$ be a subring of $\Q$, and let $G$ be a torsion-free CSA group that does not contain a copy of $\Z^2$.
\begin{enumerate}
\item The embedding $G \to G^A$ is isometric for $\scl$.
\item Suppose that $\scl_G(g) > 0$ for all $e \neq g \in G$. Then $\scl_{G^A}(g) > 0$ for all $e \neq g \in G^A$. \qed
\end{enumerate}
\end{theorem}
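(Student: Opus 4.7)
The plan is to recognise $G \to G^A$ as an iterated malnormal rational extension in the sense of Section \ref{sec:ire}, and then invoke Theorem \ref{thm:iterated} for both conclusions. The structural content already collected in the paragraphs preceding the statement does most of the work; what remains is essentially a bookkeeping check that the Myasnikov--Remeslennikov construction lands inside the framework of that section.

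I would first verify that each basic building block $H *_Z (Z \otimes A)$ occurring in the inductive definition of $H^*$ is a malnormal rational extension. Since $H$ is torsion-free, CSA, and contains no copy of $\Z^2$, every centraliser $Z \leq H$ is abelian, torsion-free, and locally cyclic, hence isomorphic to a subgroup of $\Q$; then $Z \otimes A$ is still a subgroup of $\Q$ containing $Z$, so $H *_Z (Z \otimes A)$ is a rational extension. Malnormality of $Z$ in $H$ is immediate from the CSA assumption. The well-ordering of $\mathcal{Z}$ used to build $H^*$ then exhibits $H \to H^*$ as an iterated malnormal rational extension indexed by an ordinal, with the limit step handled by the directed union.

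Finally, the results \cite[Lemmas 4 and 6]{MR2} guarantee that $H^*$ inherits from $H$ the hypotheses of being torsion-free, CSA, and free of $\Z^2$ (the latter because centralisers in $H^*$ are again locally cyclic, being of the form $Z \otimes A$). Thus the inductive procedure $G = G^{(0)} \to G^{(1)} \to \cdots$ can be concatenated into a single transfinite tower, with the directed union $G^A = \bigcup_n G^{(n)}$ corresponding to the first limit ordinal at which things terminate. This exhibits $G \to G^A$ as an iterated malnormal rational extension, so item (1) of Theorem \ref{thm:iterated} yields part (1) of the present theorem, and item (2) yields part (2). I do not expect any genuine obstacle: the only care needed is to concatenate the ordinal indexings of the successive $G^{(n)} \to G^{(n+1)}$ stages coherently, which is straightforward, and to appeal to Lemma \ref{lem:dirun} at the final limit.
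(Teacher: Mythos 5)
Your proposal is correct and follows essentially the same route as the paper: the paragraphs preceding the statement already recast the Myasnikov--Remeslennikov tower $G = G^{(0)} \to G^{(1)} \to \cdots \to G^A$ as an iterated malnormal rational extension (using that the no-$\Z^2$ hypothesis makes all centralisers locally cyclic subgroups of $\Q$, that CSA gives malnormality, and that \cite[Lemmas 4 and 6]{MR2} propagate the hypotheses), after which Theorem \ref{thm:iterated} gives both conclusions at once.
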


\begin{corollary}
\label{cor:hyp}

Let $A$ be a subring of $\Q$, and let $G$ be a non-cyclic torsion-free hyperbolic group, or a non-abelian free group (of any rank).
\begin{enumerate}
\item The embedding $G \to G^A$ is isometric for $\scl$.
\item $\scl_{G^A}(g) > 0$ for all $e \neq g \in G^A$.
\item The space $\qm(G^A)/\Hom(G^A)$ is infinite-dimensional.
\end{enumerate}
\end{corollary}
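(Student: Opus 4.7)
The plan is to reduce everything to Theorem \ref{thm:Acomp} and Corollary \ref{cor:embedding:restriction}. To invoke Theorem \ref{thm:Acomp}, I need to verify three hypotheses on $G$: it is torsion-free CSA, it contains no copy of $\Z^2$, and every non-identity element has positive $\scl_G$. Non-abelian free groups clearly satisfy all of these: they are torsion-free with cyclic centralisers (hence CSA and $\Z^2$-free), and $\scl$ is positive on every non-identity element by Duncan--Howie. For a non-cyclic torsion-free hyperbolic group $G$: torsion-freeness is in the hypothesis; CSA holds because in a torsion-free hyperbolic group the centraliser of any non-identity element is the maximal cyclic subgroup containing it, and maximal cyclic subgroups in a torsion-free hyperbolic group are malnormal; the absence of $\Z^2$ is a classical property of hyperbolic groups, since any abelian subgroup is virtually cyclic, hence cyclic in the torsion-free case; finally, $G$ is non-elementary (a torsion-free elementary hyperbolic group is trivial or $\Z$), so the Calegari--Fujiwara spectral gap \cite{calefuji} provides a uniform positive lower bound on $\scl_G$ of any non-identity element.

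With these verifications in place, items (1) and (2) follow immediately from Theorem \ref{thm:Acomp}, since the $A$-completion is built via the iterated malnormal rational extension construction described just before that theorem.

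For item (3), I would apply Corollary \ref{cor:embedding:restriction} to the isometric embedding $G \to G^A$ from item (1). This gives a surjection $\qm(G^A)/\Hom(G^A) \to \qm(G)/\Hom(G)$, so it suffices to know that $\qm(G)/\Hom(G)$ is infinite-dimensional. For non-abelian free groups this is the classical theorem of Brooks; for non-cyclic torsion-free hyperbolic groups (which are non-elementary, as noted above) this is the theorem of Epstein--Fujiwara. In either case infinite-dimensionality passes to $\qm(G^A)/\Hom(G^A)$ via the surjection.

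There is no real obstacle here: the work is entirely in checking that the input hypotheses to Theorem \ref{thm:Acomp} and Corollary \ref{cor:embedding:restriction} are satisfied by the two classes of groups. The only step where one has to appeal to a non-trivial external result is the verification of positive $\scl$ (Duncan--Howie in the free case, Calegari--Fujiwara in the hyperbolic case) and the infinite-dimensionality of $\qm/\Hom$ (Brooks, respectively Epstein--Fujiwara).
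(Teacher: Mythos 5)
Your proposal is essentially the paper's proof: reduce to Theorem~\ref{thm:Acomp} for items (1)--(2) and to Corollary~\ref{cor:embedding:restriction} plus Brooks/Epstein--Fujiwara for item~(3). Two small points of difference. First, for the positivity input in the hyperbolic case you invoke Calegari--Fujiwara as giving a uniform positive lower bound on $\scl_G$ of \emph{every} non-identity element, but their theorem only gives a gap for \emph{chiral} elements; one still needs to rule out achiral elements, which the paper does via Lemma~\ref{lem:tfah} (torsion-free acylindrically hyperbolic implies every generalised loxodromic is chiral). The conclusion is the same, but the chirality step is a genuine intermediate claim that you skipped. Second, for free groups you quote Duncan--Howie, whereas the paper gets the finite-rank free case for free from the hyperbolic case and then handles infinite rank via retracts onto $F_2$; your route is equally valid (and Duncan--Howie does apply to free groups of arbitrary rank), so this is just a different choice of citation, not a different argument.
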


\begin{proof}
With these assumptions, $G$ is CSA and does not contain a copy of $\Z^2$, hence the first item follows from Theorem \ref{thm:Acomp}.

If $G$ is a non-elementary hyperbolic group, then all chiral elements have positive $\scl$ \cite{calefuji}. If $G$ is moreover torsion-free, then all elements are chiral (Lemma \ref{lem:tfah}). So Theorem \ref{thm:Acomp} gives the second item for non-cyclic torsion-free hyperbolic groups, in particular for free groups of finite rank. For free groups of infinite rank, it follows from this, and the fact that every element is contained in a retract isomorphic to $F_2$, hence also in this case all non-identity elements have positive $\scl$.

Non-abelian free groups \cite{brooks} and non-elementary hyperbolic groups \cite{epsfuji} admit an infinite-dimensional space of homogeneous quasimorphisms modulo homomorphisms. Hence the third item follows from the first and Corollary \ref{cor:embedding:restriction}.
\end{proof}

\begin{remark}
The quasimorphism spaces from Corollary \ref{cor:hyp} have uncountable dimension. What is more, they are not separable, when seen as Banach spaces endowed with the defect norm. Indeed, this is true for free groups \cite[Example 2.62]{calegari}, from which it follows for all acylindrically hyperbolic groups via extensions from hyperbolically embedded subgroups \cite{hullosin}, from which it follows for any isometric embedding of such a group by Corollary \ref{cor:embedding:restriction}, in particular to the $A$-completions from Corollary \ref{cor:hyp}.
\end{remark}

When $G = F_S$ is a free group, the $A$-completion $F_S^A$ is called the \emph{free $A$-group on $S$}.

\begin{proof}[Proof of Theorems \ref{intro:thm:scl:embedding} and \ref{intro:thm:positivity}]

If $|S| \leq 1$, then $F_S \to F_S^{\Q}$ is either the identity on the trivial group, or the embedding $\Z \to \Q$, and then the statements obviously hold. If $|S| > 1$, Corollary \ref{cor:hyp} applies.
\end{proof}

The first step towards Corollary \ref{cor:hyp} was the fact that rational extensions preserve $\scl$. Both proofs (Proposition \ref{prop:rational} and Remark \ref{rem:alexis}) rely on the fact that we extend along a locally cyclic abelian group. There are other groups where $A$-completions have been described as iterated extensions by centralisers, for example coherent RAAGs \cite{raags}, but the presence of higher rank centralisers is an obstacle to adapting our approach. Nevertheless, we can obtain the infinite-dimensionality result for such groups as well.

\begin{corollary}
\label{cor:raags}

Let $A$ be a subring of $\Q$. If $G^A$ is an $A$-group that surjects onto a free $A$-group, then $\qm(G^A)/\Hom(G^A)$ is infinite-dimensional.

In particular, this is true for the $A$-completion of a non-abelian right-angled Artin group.
\end{corollary}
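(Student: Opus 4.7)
The plan is to use pullback of quasimorphisms. Let $\pi\colon G^A \twoheadrightarrow F_S^A$ be the given surjection onto a free $A$-group, which I will assume is non-abelian (i.e.\ $|S|\geq 2$), since otherwise $\qm(F_S^A)/\Hom(F_S^A) = 0$ and the hypothesis gives no information. The pullback $\pi^*\varphi := \varphi\circ\pi$ clearly sends $\qm(F_S^A)$ into $\qm(G^A)$ and $\Hom(F_S^A)$ into $\Hom(G^A)$. Moreover, because $\pi$ is surjective, if $\pi^*\varphi$ is a homomorphism on $G^A$ then $\varphi$ is a homomorphism on $F_S^A$: any $g,h\in F_S^A$ admit preimages through which the homomorphism equation transfers. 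Hence $\pi^*$ descends to an injection
\[\qm(F_S^A)/\Hom(F_S^A)\hookrightarrow\qm(G^A)/\Hom(G^A),\]
and the source is infinite-dimensional by Corollary~\ref{cor:hyp}.

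For the RAAG case, suppose $A_\Gamma$ is a non-abelian RAAG. Then its defining graph $\Gamma$ has two non-adjacent vertices $a,b$. Sending $a\mapsto a$, $b\mapsto b$, and every other vertex generator to $e$ defines a homomorphism $A_\Gamma \to F_2 = \langle a,b\rangle$: every edge of $\Gamma$ has at most one endpoint in $\{a,b\}$ (since $a,b$ are not adjacent), so each commuting relation $[v,w]=e$ of $A_\Gamma$ maps to a trivial commutator in $F_2$. The universal property of the $A$-completion extends this to an $A$-group homomorphism $A_\Gamma^A\to F_2^A$, whose image is an $A$-subgroup containing $\{a,b\}$ and hence equals all of $F_2^A$. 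Applying the first part concludes.

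I do not expect any substantial obstacle: the whole argument is a formality once Corollary~\ref{cor:hyp} is in hand. The only mildly delicate point worth isolating is the passage from the surjection $A_\Gamma \twoheadrightarrow F_2$ to a surjection $A_\Gamma^A \twoheadrightarrow F_2^A$, which uses precisely that $F_2$ generates $F_2^A$ as an $A$-group, together with the fact that $A$-group homomorphisms preserve $A$-powers.
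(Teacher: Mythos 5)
Your proof is correct and follows essentially the same route as the paper: pull back quasimorphisms along the surjection (which is injective modulo homomorphisms) and, for the RAAG case, retract onto a rank-two free parabolic subgroup spanned by two non-adjacent vertices, then use the universal property of the $A$-completion plus the fact that the image is an $A$-subgroup containing the image of $F_2$. You are merely a bit more explicit than the paper in verifying that the retraction is well-defined and in spelling out why $\pi^*$ descends to an injection.
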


\begin{proof}
If $G^A \to F^A$ is a surjection, then the pullback $\qm(F^A)/\Hom(F^A) \to \qm(G^A)/\Hom(G^A)$ is injective, so the first statement follows from Corollary \ref{cor:hyp}.

If $G$ is a non-abelian RAAG, then it retracts onto a parabolic subgroup $F$ that is free of rank $2$. So the universal property of $G^A$ gives induces a map:
\[\begin{tikzcd}
	G & {G^A} \\
	F & {F^A}
	\arrow[from=1-1, to=1-2]
	\arrow[two heads, from=1-1, to=2-1]
	\arrow[from=1-1, to=2-2]
	\arrow[dashed, from=1-2, to=2-2]
	\arrow[from=2-1, to=2-2]
\end{tikzcd}\]
Because the image is a $\Q$-group and contains the image of $F \to F^A$, the induced map must be surjective.
\end{proof}

\section{Positive theory}
\label{sec:theory}

The \emph{positive theory} of a group $G$ is the collection of first-order sentences without negations that hold in $G$. The positive theory of every group contains the positive theory of a non-abelian free group \cite{merzljakov, makanin}; if there are no additional positive sentences, then the group is said to have \emph{trivial positive theory}.

Many non-trivial positive sentences imply that every homogeneous quasimorphism is a homomorphism. This is true for example for groups satisfying a law \cite{law}, uniformly perfect groups, and groups with commuting conjugates \cite{kotschick, spectrum}. In \cite[Section 9.3]{positivetrees}, the authors speculate on a relation between positive theory and second bounded cohomology. In particular, they ask whether there exist groups with infinite-dimensional second bounded cohomology and non-trivial positive theory \cite[Question 9.5]{positivetrees} (the converse \cite[Question 9.4]{positivetrees} is addressed in \cite{tarski}). Note that the space of homogeneous quasimorphisms modulo homomorphisms embeds in the second bounded cohomology \cite[Theorem 2.50]{calegari}. In this section, we give two examples.

\begin{lemma}
\label{lem:wordmap}

Let $G$ be a $\Q$-group. Then for all $w \in F_m \setminus [F_m, F_m]$, the word map $w \colon G^m \to G$ is surjective.
\end{lemma}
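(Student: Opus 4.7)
The plan is to exploit the $\Q$-group structure of $G$ to extract a suitable rational root of the target. Since $w \notin [F_m, F_m]$, its image in the abelianization $F_m^{\mathrm{ab}} \cong \Z^m$ is nonzero, so at least one exponent sum is nonzero; after relabelling the basis, I may assume $n_1 \neq 0$, where $n_1$ is the exponent sum of $y_1$ in $w$. The goal is then, given any $g \in G$, to produce an $m$-tuple with $w$-image equal to $g$.

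The natural candidate is $x_1 := g^{1/n_1}$ and $x_j := e$ for $j \geq 2$: the element $g^{1/n_1}$ is defined because $G$ is a $\Q$-group and $1/n_1 \in \Q$ is a valid exponent, including when $n_1 < 0$. To check $w(x_1, e, \ldots, e) = g$, I would observe that the substitution homomorphism $F_m \to G$ sending $y_1 \mapsto x_1$ and $y_j \mapsto e$ for $j \geq 2$ factors through the canonical retraction $F_m \to \langle y_1 \rangle \cong \Z$ that kills the remaining generators. Under this retraction $w$ maps to $y_1^{n_1}$, by the very definition of exponent sum, so $w(x_1, e, \ldots, e) = x_1^{n_1} = (g^{1/n_1})^{n_1} = g^1 = g$, using axioms (2) and (1) of a $\Q$-group.

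Once the right input is identified, the proof is entirely formal, and I do not foresee any substantive obstacle. The argument uses only existence of $\Q$-roots, not their uniqueness, and collapses the word map at all but one coordinate, reducing matters to the fact that a nonzero integer acts surjectively on a divisible abelian group — here, on the $\Q$-cyclic subgroup of $G$ generated by $g$.
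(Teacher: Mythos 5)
Your proof is correct and follows essentially the same approach as the paper: both identify a nonzero exponent sum (after relabelling, of the first generator), plug in $g^{1/n_1}$ there and $e$ elsewhere, and use divisibility of the $\Q$-group. Your phrasing via the retraction $F_m \to \langle y_1 \rangle$ is a slightly cleaner justification of why the word collapses to $x_1^{n_1}$ than the paper's explicit decomposition $w = x_1^{i_1}\cdots x_m^{i_m} v$ with $v \in [F_m,F_m]$, but the underlying idea is identical.
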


\begin{proof}
Write
\[w = x_1^{i_1} \cdots x_m^{i_m} v;\]
where $v \in [F_m, F_m]$. Because $w \notin [F_m, F_m]$, one of the exponents $i_k$ must be non-zero; up to reordering the basis, we may assume that $i_1 \neq 0$. No $w(g, 1, \ldots, 1) = g^{i_1}$, so $w(G^m)$ contains the image of $x \mapsto x^{i_1}$, which is surjective on a $\Q$-group.
\end{proof}

\begin{proof}[Proof of Theorem \ref{intro:thm:qm}]
Let $G$ be the $\Q$-completion of a torsion-free non-cyclic hyperbolic group. Then $G$ satisfies both statements, by Lemma \ref{lem:wordmap} and Corollary \ref{cor:hyp}.
\end{proof}

Surjectivity of a word map is a positive sentence, but not always a non-trivial one. Indeed, there are certain words, called \emph{silly} by Segal \cite[Section 3.1]{segal}, that define a surjective word map on all groups. These are the ones that can be written as $w = x_1^{i_1} \cdots x_m^{i_m} v$, as in the proof of Lemma \ref{lem:wordmap}, where $\mathrm{gcd}(i_1, \ldots, i_m) = 1$.

Taking a non-silly word map (e.g. $w = y^2$) we get a non-trivial positive sentence (e.g. $\forall x \exists y : y^2 = x$). This shows that $\Q$-groups have non-trivial positive theory, and the ones from Theorem \ref{intro:thm:qm} have infinite-dimensional second bounded cohomology, which gives a positive answer to \cite[Question 9.5]{positivetrees}.

\medskip

We also give a different example, which is less natural but more direct. Consider Thompson's group $T$ \cite{CFP}: this is a finitely presented infinite simple group of homeomorphisms of the circle $\R / \Z$. The group $\widetilde{T}$ is the group of homeomorphisms of $\R$ that commute with integer translations and induce an element of $T$ on $\R/\Z$.

\begin{proposition}
\label{prop:thompson}
    The group $\widetilde{T}^{\N}$ has non-trivial positive theory, and $\qm(\widetilde{T}^{\N})/\Hom(\widetilde{T}^{\N})$ is infinite-dimensional.
\end{proposition}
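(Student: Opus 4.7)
My plan is to handle the two claims of the proposition separately, interpreting $\widetilde{T}^{\N}$ as the $\Q$-completion of $\widetilde{T}$, constructed in the spirit of Section \ref{sec:agroups} despite $\widetilde{T}$ not being CSA (since its centre $\langle t \rangle$, generated by integer translation, is non-trivial).

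For the positive-theory assertion, the argument is immediate. Being a $\Q$-group, $\widetilde{T}^{\N}$ admits unique $n$-th roots for every $n$, so in particular the positive sentence $\forall x\, \exists y \colon y^2 = x$ holds in $\widetilde{T}^{\N}$. This sentence is not silly, as it fails in any non-abelian free group (for instance the generator $a$ of $F_2 = \langle a, b\rangle$ is not a square). Hence the positive theory of $\widetilde{T}^{\N}$ is non-trivial, which gives the first claim.

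For the infinite-dimensionality of $\qm(\widetilde{T}^{\N})/\Hom(\widetilde{T}^{\N})$, the plan is to apply Corollary \ref{cor:embedding:restriction}. I will exhibit an isometric embedding into $\widetilde{T}^{\N}$ of a group whose own space of non-trivial homogeneous quasimorphisms is already known to be infinite-dimensional. A natural candidate is a non-abelian free subgroup obtained by lifting a non-abelian free subgroup of $T = \widetilde{T}/\langle t \rangle$ to $\widetilde{T}$ in such a way that it meets $\langle t \rangle$ trivially; such a lift exists because $H^2(F; \Z) = 0$ for free $F$. If the inclusion of this free subgroup into $\widetilde{T}^{\N}$ is isometric for $\scl$, then combining Corollary \ref{cor:hyp} (to supply infinitely many non-trivial homogeneous quasimorphisms on the free subgroup, or its free $\Q$-completion) with Corollary \ref{cor:embedding:restriction} (to lift them to $\widetilde{T}^{\N}$ with the same defect) yields the desired infinite-dimensional family.

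The main obstacle is verifying this isometric embedding. Because $\widetilde{T}$ is not CSA, Theorem \ref{thm:Acomp} does not apply verbatim and we cannot simply quote the framework of Section \ref{sec:agroups}; instead the plan is to analyse the specific iterated amalgamated products defining $\widetilde{T}^{\N}$ step by step, invoking Chen--Heuer (Theorem \ref{thm:chenheuer}) at each stage to show that $\scl$ on chains supported in the chosen free subgroup is preserved. A potentially cleaner alternative would be to realise $\widetilde{T}^{\N}$ as acting acylindrically on a Bass--Serre tree arising from its construction and to apply the Hull--Osin extension theorem, which would simultaneously produce infinitely many non-trivial quasimorphisms by extending them from a hyperbolically embedded free subgroup.
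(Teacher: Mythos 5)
Your proposal rests on a misreading of the notation: $\widetilde{T}^{\N}$ is not a $\Q$-completion. Throughout the paper the ring used for completions is denoted $A$ (a subring of $\Q$), and $\N$ is not a ring; here $\widetilde{T}^{\N}$ is the countable direct power of $\widetilde{T}$, one copy for each natural number. The author explicitly introduces this as an example ``of a different flavour'' precisely because it is \emph{not} obtained by the $\Q$-completion machinery of Section~\ref{sec:agroups} (which, as you note, would not apply anyway since $\widetilde{T}$ has nontrivial centre). With the correct reading, your positive-theory step collapses: $\widetilde{T}^{\N}$ is not a $\Q$-group, so you cannot invoke unique roots to get the sentence $\forall x\,\exists y\colon y^2 = x$.

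The paper's actual argument is much more direct and uses none of the $\scl$ or extension machinery. For quasimorphisms: $\widetilde{T}$ is perfect with a one-dimensional space of homogeneous quasimorphisms, spanned by the rotation quasimorphism; pulling this back along the countably many coordinate projections $\widetilde{T}^{\N} \to \widetilde{T}$ gives infinitely many linearly independent homogeneous quasimorphisms, while perfection of $\widetilde{T}$ rules out nontrivial real homomorphisms, so $\qm(\widetilde{T}^{\N})/\Hom(\widetilde{T}^{\N})$ is infinite-dimensional. For positive theory: $T$ is uniformly perfect, hence so is $T^{\N}$, which therefore has nontrivial positive theory; and $\widetilde{T}^{\N}$ is a central extension of $T^{\N}$ (with kernel a power of $\Z$), so by \cite[Theorem G]{positivetrees} it inherits nontrivial positive theory. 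Your plan of producing an isometric embedding of a free subgroup and invoking Corollary~\ref{cor:embedding:restriction} is both unnecessary and problematic here: there is no isometrically embedded copy of $F_2$ available (for instance, $T$ contains free subgroups but they embed very far from isometrically, since $T$ is uniformly perfect and hence $\scl$ vanishes identically on $T$), and you offer no route to establishing the required isometry. You should abandon the $\Q$-group interpretation entirely for this proposition.
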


\begin{proof}
The group $\widetilde{T}$ is perfect, and has a one-dimensional space of homogeneous quasimorphisms, spanned by the rotation quasimorphism (see e.g. \cite[Chapter 5]{calegari}). It follows that $\widetilde{T}^{\N}$ has an infinite-dimensional space of homogeneous quasimorphisms, and no non-zero homomorphisms.

Moreover, $T$ is uniformly perfect \cite{T:uniformlyperfect}, and hence so is $T^{\N}$, which in particular has non-trivial positive theory. Now $\widetilde{T}^{\N}$ is a central extension of $T^{\N}$, and hence it also has non-trivial positive theory \cite[Theorem G]{positivetrees}.
\end{proof}

Note that both examples are countable but infinitely generated. We do not know of a finitely generated example.

\section{Rationality}
\label{sec:rationality}

We say that \emph{$\scl$ is rational} on a group $G$, if $\scl_G(c) \in \Q$ for all $c \in \CC_1^H(G)$ with rational coefficients. Rationality on free groups is one of the most influential results on $\scl$ \cite{cale:rational}, which motivated our Conjecture \ref{conj:rationality}. To approach it, one would have to start by showing that malnormal rational extensions of free groups have rational $\scl$. Indeed, this is not only a reasonable first step, but also necessary.

\begin{proposition}
\label{prop:rational:embed:completion}

Let $G$ be a torsion-free CSA group that does not contain a copy of $\Z^2$. Let $z \in G$ be an element that generates its own centraliser. Then every rational extension of the form $G *_{z = a} A$ embeds into $G^{\Q}$, isometrically for $\scl$.

In particular, if $\scl$ is rational on $G^{\Q}$, then it is rational on $G *_{z = a} A$.
\end{proposition}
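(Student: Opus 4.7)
The plan is to factor the map $E \coloneqq G *_{z=a} A \to G^{\Q}$ through $G *_{z=1} \Q$, and prove isometry for $\scl$ at each step using the tools of Sections \ref{sec:iterated} and \ref{sec:agroups}.

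First I would build an embedding $E \to G *_{z=1} \Q$ that is the identity on $G$ and sends $A$ into $\Q$ via $x \mapsto x/a$. The amalgamating relations match ($a$ goes to $1$, which is identified with $z$), the maps on the factors are injective, so the normal form theorem for amalgamated products gives an embedding. To show this embedding is isometric for $\scl$, I would use that $G$ is CSA and $z$ generates its own centraliser, so $\langle z \rangle = \langle a \rangle$ is malnormal in $G$; combined with $A$ abelian, this gives that the centraliser of $a$ in $E$ is $A$. Iterated amalgamation then yields
\[G *_{z=1} \Q \;\cong\; G *_{z=a} \Q \;=\; (G *_{z=a} A) *_A \Q \;=\; E *_A \Q,\]
realising $G *_{z=1} \Q$ as a rational extension of $E$ in the sense of Section \ref{sec:iterated}. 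Proposition \ref{prop:rational}(1) then gives that $E \to G *_{z=1} \Q$ is isometric for $\scl$.

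For the second embedding, I would realise $G *_{z=1} \Q$ as the first stage in the construction of $G^{\Q}$ in Section \ref{sec:agroups}. Choosing the well-ordering of $\mathcal{Z}$ so that $\langle z \rangle$ comes first, the initial amalgamation step produces $G *_{\langle z \rangle} (\langle z \rangle \otimes \Q) \cong G *_{z=1} \Q$. This intermediate group is again torsion-free CSA with no copy of $\Z^2$ by \cite[Lemmas 4 and 6]{MR2}, so the remaining steps form an iterated malnormal rational extension with colimit $G^{\Q}$. Theorem \ref{thm:iterated} then yields that $G *_{z=1} \Q \to G^{\Q}$ is isometric for $\scl$. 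Composing the two isometries proves the main statement, and the rationality conclusion follows immediately, since $\scl_E(c) = \scl_{G^{\Q}}(c)$ for every rational chain $c \in \CC_1^H(E)$.

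The main obstacle is the identification $G *_{z=1} \Q \cong E *_A \Q$ underlying the first isometry, which requires a careful manipulation of universal properties of amalgamated products (and of the rescaling automorphism of $\Q$ sending $a$ to $1$). Once this is in place, the proof is a concatenation of Proposition \ref{prop:rational} and Theorem \ref{thm:iterated}, with no new conceptual input.
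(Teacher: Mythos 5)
Your proposal is correct and follows essentially the same route as the paper: identify $G *_{z=1}\Q$ as the first step of the iterated rational extension constructing $G^{\Q}$ (by placing $\langle z\rangle$ first in the well-ordering), realise $G *_{z=1}\Q$ as a rational extension of $E$ via $E *_A \Q \cong G *_{z=1}\Q$, and conclude with Proposition \ref{prop:rational} and Theorem \ref{thm:iterated}. The only cosmetic difference is that you make the rescaling $x \mapsto x/a$ explicit where the paper simply replaces $A$ by an isomorphic subgroup of $\Q$ so that $a=1$; the identification $E *_A \Q \cong G *_{z=1}\Q$ you flag as the "main obstacle" is standard associativity of amalgamated products over nested subgroups and needs no further argument.
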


\begin{proof}
Recall the construction of $G^{\Q}$ from Section \ref{sec:agroups}. The first step (that is, the embedding $G = G^{(0)} \to G^{(1)}$) consists in choosing a set $\mathcal{Z}$ of representatives of conjugacy classes of centralisers in $G$, and performing the single rational extension $G *_Z (Z \otimes \Q)$ where $Z \in \mathcal{Z}$ is the first element in a well-ordering of $\mathcal{Z}$. By assumption $z \in G$ generates its own centraliser, so we may choose it to generate the first centraliser in $\mathcal{Z}$. Hence
\[G \to G *_{\langle z \rangle} (\langle z \rangle \otimes \Q) = G *_{z = 1} \Q\]
is the first step of the iterated rational extension that constructs $G^\Q$. This shows that $G^\Q$ is an iterated rational extension of $G *_{z = 1} \Q$.

Now consider a rational extension $G *_{z = a} A$. Up to replacing $A$ by an isomorphic subgroup of $\Q$, we may assume that $a = 1$. But then
\[G *_{z = 1} A \to (G *_{z = 1} A) *_{A} \Q \cong G *_{z = 1} \Q\]
is a rational extension. Therefore $G^{\Q}$ is an iterated rational extension of $G *_{z = 1} A$. We conclude by Theorem \ref{thm:iterated}.
\end{proof}

It turns out that it suffices to look at root extensions.

\begin{lemma}
\label{lem:roottorational}

Let $G$ be a group and $z \in G$. Suppose that every root extension of the form $\langle G, t \mid t^p = z \rangle$ has rational $\scl$. Then every rational extension of the form $G *_{z = a} A$ has rational $\scl$.
\end{lemma}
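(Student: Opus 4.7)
\medskip

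\noindent\textbf{Proof proposal.} The plan is to exploit the fact that every subgroup $A$ of $\Q$ is locally cyclic, so $G *_{z = a} A$ is built from root extensions by a sequence of further rational extensions that are known to preserve $\scl$. Concretely, let $c \in \CC_1^H(G *_{z = a} A)$ be a chain with rational coefficients. Up to clearing denominators and using homogeneity of $\scl$, I may assume $c$ is an integral chain $\sum_i g_i$ with finite support. Each $g_i$ involves only finitely many elements of $A$, so there is a finitely generated subgroup $A_0 \leq A$ containing $a$ and all the elements of $A$ appearing in any normal form of the $g_i$. Since every finitely generated subgroup of $\Q$ is cyclic, $A_0 = \langle t \rangle$ with $a = t^p$ for some $p \geq 1$, and
\[
G *_{z = a} A_0 \;=\; \langle G, t \mid t^p = z\rangle
\]
is a root extension, on which $\scl$ is rational by hypothesis.

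Next I would write $A$ as the directed union of the finitely generated (hence cyclic) subgroups $A_i \leq A$ containing $A_0$. For each such inclusion $A_i \hookrightarrow A_{i+1}$, the natural identification
\[
G *_{z = a} A_{i+1} \;\cong\; \bigl(G *_{z = a} A_i\bigr) *_{A_i} A_{i+1}
\]
exhibits the embedding $G *_{z = a} A_i \hookrightarrow G *_{z = a} A_{i+1}$ as a rational extension in the sense of Section \ref{sec:ire}, since $A_i$ is isomorphic to a subgroup of $\Q$. By Proposition \ref{prop:rational}(1) each such embedding is isometric for $\scl$ (including on chains of infinite $\scl$, which is part of the definition used here).

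Finally, since $G *_{z = a} A = \bigcup_i G *_{z = a} A_i$ is a directed union, Lemma \ref{lem:dirun} gives
\[
\scl_{G *_{z = a} A}(c) \;=\; \inf_i \scl_{G *_{z = a} A_i}(c).
\]
The isometry in the previous step shows that $\scl_{G *_{z = a} A_i}(c) = \scl_{G *_{z = a} A_0}(c)$ for every $i$, so $\scl_{G *_{z = a} A}(c)$ equals $\scl$ of $c$ in the root extension $\langle G, t \mid t^p = z\rangle$, which is rational by hypothesis.

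There is no real obstacle here beyond the bookkeeping: the content is entirely in the two earlier tools, namely the fact that rational extensions preserve $\scl$ (Proposition \ref{prop:rational}) and the directed-union formula (Lemma \ref{lem:dirun}); the only observation needed is the elementary fact that subgroups of $\Q$ are locally cyclic, which lets one interpolate between $G *_{z=a} A_0$ and $G *_{z=a} A$ by a tower of rational extensions.
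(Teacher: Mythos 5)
Your proof is correct, and it relies on exactly the same two ingredients as the paper's proof (Proposition \ref{prop:rational}(1) for rational extensions being isometric, and Lemma \ref{lem:dirun} for directed unions), but the decomposition is organised a bit differently. The paper first proves rationality for $E = G *_{z=1}\Q$ directly, using the explicit tower $E_n = G *_{z=1}\Z[1/n!] \cong \langle G, t \mid t^{n!} = z\rangle$; it then reduces the general $E = G *_{z=a}A$ to this case by observing that $E \to E *_A \Q \cong G *_{z=1}\Q$ is a single rational extension, hence isometric. You instead do a chain-level reduction: given a rational chain $c$, you locate a finitely generated (hence cyclic) $A_0 \leq A$ containing $a$ and the $A$-support of $c$, identify $G *_{z=a}A_0$ with a root extension, and exhaust $A$ by finitely generated subgroups above $A_0$. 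Both routes are sound; the paper's reduction through $\Q$ avoids any bookkeeping about the support of a particular chain, while yours avoids the detour through $G *_{z=1}\Q$. One cosmetic remark: you write ``each inclusion $A_i \hookrightarrow A_{i+1}$'' as if the system were a chain; since $A \leq \Q$ is countable one can indeed take a chain, but it is cleaner to simply note that the argument applies to any pair $A_i \leq A_j$ in the directed system, which is all Lemma \ref{lem:dirun} needs.
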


\begin{proof}
We start by proving the lemma for $E = G *_{z = 1} \Q$. For all $n \geq 2$, define
\[E_n \coloneqq G *_{z = 1} \left( \Z \left[ \frac{1}{n!} \right] \right) \cong \langle G, t \mid t^{n!} = z \rangle.\]
By assumption, $\scl$ is rational on $E_n$. Moreover,
\[ E_{n+1} \cong E_n *_{\Z \left[ \frac{1}{n!} \right]} \left( \Z \left[ \frac{1}{(n+1)!} \right] \right).\]
This shows that $E_{n+1}$ is a rational extension of $E_n$, and so by Proposition \ref{prop:rational} the embedding $E_n \to E_{n+1}$ is isometric. Finally, $E$ is the directed union of the $E_n$, so by Lemma \ref{lem:dirun} we conclude.

In general, let $E = G *_{z = a} A$, where $a \in A$. Up to changing $A$ to an isomorphic subgroup of $\Q$, we may assume that $a = 1$. Now consider
\[E' = E *_A \Q \cong G *_{z = 1} \Q.\]
It is a rational extension of $E$, so by Proposition \ref{prop:rational} the embedding $E \to E'$ is isometric. Moreover, $\scl$ is rational on $E'$ by the previous paragraph, and so $\scl$ is rational on $E$.
\end{proof}

However, square root extensions of the free group already present a major obstacle. Let $m \geq 1$. Then the following square root extension of $F_{2m}$:
\[K_{2m+1} = \langle a_1, b_1, \ldots, a_m, b_m, t \mid t^2 = [a_1, b_1] \cdots [a_m, b_m] \rangle\]
is the fundamental group of a closed non-orientable surface. Its orientable double cover corresponds to the index-$2$ subgroup
\[\langle a_i, b_i \rangle *_{\prod [a_i, b_i] = \prod t [a_i, b_i] t^{-1}} \langle t a_i t^{-1}, t b_i t^{-1} \rangle,\]
which is the fundamental group of a closed surface of genus $2m$. It follows that the Euler characteristic of the non-orientable surface is $(2 - 4m)/2 = 1 - 2m$, and so its demigenus is $2 - (1 - 2m) = 2m + 1$, hence the notation. When $m  = 1$, we recover the classical presentation of Dyck's surface \cite{dyck}.

\begin{proof}[Proof of Theorem \ref{intro:thm:surfaces}]

$K_{2m+1}$ is a root extension of $F_{2m} = \langle a_1, b_1, \ldots, a_m, b_m \rangle$ over the element $z = [a_1, b_1] \cdots [a_m, b_m]$. Since $z$ generates its own centraliser, Proposition \ref{prop:rational:embed:completion} shows that $K_{2m+1}$ embeds into $F_{2m}^{\Q}$, isometrically for $\scl$.
\end{proof}

In particular, if $\scl$ were rational on $F_{2m}^{\Q}$, then it would be rational on $K_{2m+1}$. Rationality of $\scl$ on surface groups is a major open problem \cite[Question 7.5.4]{heuer:survey}, which motivated many of the recent advances in stable commutator length \cite{chengog, alexis}.

\footnotesize

\bibliographystyle{amsalpha}
\bibliography{ref}

\vspace{0.5cm}

\normalsize

\noindent{\textsc{Department of Pure Mathematics and Mathematical Statistics, University of Cambridge, UK}}

\noindent{\textit{E-mail address:} \texttt{ff373@cam.ac.uk}}

\end{document}